\tiny\color{gray},
\tikzset{
every node/.style={circle, inner sep=2pt}
}
\pgfplotsset{
    legend entry/.initial=,
    every axis plot post/.code={%
        \pgfkeysgetvalue{/pgfplots/legend entry}\tempValue
        \ifx\tempValue\empty
            \pgfkeysalso{/pgfplots/forget plot}%
        \else
            \expandafter\addlegendentry\expandafter{\tempValue}%
        \fi
    },
}
\newtheorem{theorem}{Theorem}
\newtheorem{lemma}[theorem]{Lemma}
\newtheorem{proposition}[theorem]{Proposition}
\newtheorem{corollary}[theorem]{Corollary}
\theoremstyle{definition}
\newtheorem{definition}[theorem]{Definition}
\newtheorem{observation}[theorem]{Observation}
\newtheorem{remark}[theorem]{Remark}
\newtheorem{example}[theorem]{Example}
\newtheorem{problem}{Problem}
\newtheorem{question}[theorem]{Question}
\def \mr {\operatorname{mr}}
\newcommand{\diag}{\operatorname{diag}}
\newcommand{\spec}{\operatorname{spec}}
\newcommand{\SNF}{\operatorname{SNF}}
\newcommand{\minors}{\operatorname{minors}}
\newcommand{\sign}{\operatorname{sign}}
\pgfplotsset{compat=1.14}
\begin{document}


\title{Eigenvalues, Smith normal form and determinantal ideals}

\author{Aida Abiad$^{a}$, Carlos A. Alfaro$^b$, Kristin Heysse$^c$ and Marcos C. Vargas$^b$
\\ \\
{\small $^a$Department of Mathematics: Analysis, Logic and Discrete Mathematics} \\
{\small Ghent University, Ghent, Belgium}\\
{\small {\tt
aida.abiad@ugent.be}} \\
{\small $^b$ Banco de M\'exico} \\
{\small Mexico City, Mexico}\\
{\small {\tt
\{carlos.alfaro,marcos.vargas\}@banxico.org.mx}} \\
{\small $^c$Department of Mathematics, Statistics, and Computer Science} \\
{\small Macalester College, USA}\\
{\small {\tt
kheysse@macalester.edu}}\\
}

\date{}
\maketitle






\maketitle

\begin{abstract}
	Determinantal ideals of graphs generalize, among others, the spectrum and the Smith normal form (SNF) of integer matrices associated to graphs.
	In this work we investigate the relationship of the spectrum and the SNF with the determinantal ideals. We show that an eigenvalue divides the $k${\it -th} invariant factor of its SNF if the eigenvalue belongs to a variety of the $k${\it -th} univariate integer determinantal ideal of the matrix. This result has as a corollary a theorem of Rushanan.
    We also study graphs having the same determinantal ideals with at  most  one  indeterminate; the socalled codeterminantal graphs, which generalize the concepts of cospectral and coinvariant graphs. We establish a necessary and  sufficient condition for graphs to  be codeterminantal on $\mathbb{R}[x]$, and some computational results on codeterminantal graphs 
    up to 9 vertices are presented.
    Finally, we show that complete graphs and star graphs are determined by the SNF of its distance Laplacian matrix.
    \\[7pt]
	
\noindent {Keywords:} determinantal ideal, graph spectrum, Smith normal form, cospectral graph, distance Laplacian matrix, sandpile group.\\
{MSC Codes:} 05C25, 05C50, 05E99, 13C40, 13P10.
\end{abstract}

\section{Introduction}

Determinantal ideals are a central topic in both commutative algebra and algebraic geometry, and they also have numerous connections with invariant theory, representation theory, and combinatorics \cite{Mbook}. In this article we explore their connections with algebraic combinatorics. In particular, we investigate their relationships with the spectrum and the Smith normal form (SNF). 

As mentioned in \cite[Chapter~13.8.1]{bh}, there is no very direct connection between the SNF and the spectrum. However, a few papers trying to relate the spectrum and SNF of matrices associated to graphs have appeared in the literature. Rushanan \cite{R} studied the SNF and spectrum of non-singular matrices with integer entries. He established divisibility relations between the largest invariant factor $s_n$ and the product of all eigenvalues. Newman and Thompson \cite[Section~8]{nt} studied the relationship between eigenvalues and invariant factors of matrices over rings of algebraic integers. Their results are concerned with products of eigenvalues rather than individual eigenvalues (or subsets thereof). The connection between the eigenvalues and Smith form has also been studied by Kirkland \cite{kirkland} for integer matrices with integer eigenvalues arising from the Laplacian of graphs, and by Lorenzini \cite{lorenzini2008} for Laplacian matrices of rank $n-1$. Recently, Elsheikh and Giesbrecht \cite{eg} established some conditions under which the $p$-adic valuations of the invariant factors of an integer matrix are equal to the $p$-adic valuations of the eigenvalues.

In this article we investigate the relationship between the determinantal ideals and the SNF and the spectrum. Determinantal ideals of graphs, which can be viewed as a generalization of both the graph spectrum and the SNF, are ideals of minors of matrices whose entries are in a polynomial ring.
Let $M(G)$ be an $n\times n$ integer matrix associated to the graph $G$ with $n$ vertices.
There are many determinantal ideals that can be associated to $G$.
For example, if $x$ is an indeterminate, then the $k${\it -th} determinantal ideal is the ideal generated by the $k$-minors of $xI_n-M(G)$.. However, it is worth noting that this ideal has subtle differences depending whether it is included in $\mathbb{Z}[x]$ or in $\mathbb{R}[x]$; in $\mathbb{R}[x]$ the ideal is principal, and in $\mathbb{Z}[x]$ might not be principal and then we must compute its Gr\"obner bases to have a compact description of it. For the relevant background on the theory of determinantal ideals and rings, and their Gr\"obner bases, we refer to \cite{bcbook, EH}.

In this paper we extend a result by Rushanan \cite{R} that states that any eigenvalue of a diagonalizable matrix divides the last invariant factor of its SNF. We show that an eigenvalue divides the $k${\it -th} invariant factor of its SNF if the eigenvalue belongs to a variety of the $k${\it -th} univariate integer determinantal ideal of the matrix. 
Next, we investigate graphs having the same determinantal ideals with at  most  one  indeterminate, the socalled codeterminantal graphs, which generalize the concepts of cospectral and coinvariant graphs. We establish a necessary and  sufficient condition for graphs to  be codeterminantal on $\mathbb{R}[x]$, and we characterize when codeterminantal graphs are cospectral and coinvariant, respectively. Moreover, we present several computational results in which we look at codeterminantal graphs 
    up to 9 vertices.
    From this computational study, we observe that the best determinantal ideals to distinguish graphs are the univariate determinantal ideals in $\mathbb{Z}[x]$, since they provide a theory that unifies the spectrum and the SNF. We also look at the SNF and the spectrum of the adjacency, Laplacian, distance and distance Laplacian of all connected graphs up to 9 vertices, and from the numerical data we conclude that the SNF of the distance Laplacian matrix performs the best for distinguishing graphs. This extends the question of  van Dam and Haemers \cite{vDH} ``which graphs are determined by their spectrum?'' to the context of codeterminantal graphs. In this regard, in Section \ref{sec:graphsdeterminedbytheirdeterminantalideals}, we show that complete graphs and star graphs are determined by the SNF of its distance Laplacian matrix. Despite that univariate determinantal ideals in $\mathbb{Z}[x]$ are more difficult to compute, we observe that if a graph is determined by its spectrum, then it is determined by their univariate determinantal ideals in $\mathbb{Z}[x]$.
    Finally, we show that complete graphs and star graphs are determined by the SNF of its distance Laplacian matrix.

This article is structured as follows. We begin in Section \ref{sec:determinantalIdeals} by  establishing  some  basic  terminology  and  giving some basic properties of determinantal ideals. 
In Section \ref{sec:cospectral} we explore codeterminantal graphs, and we give the  results  of our exhaustive computational study in which we look at the determinantal ideals of all connected graphs up to 9 vertices. Finally, in Section \ref{sec:graphsdeterminedbytheirdeterminantalideals}, we show a few families of graphs that are determined by the SNF of the distance Laplacian matrix.

\section{Determinantal ideals}\label{sec:determinantalIdeals}
What is a determinantal ideal?
To answer this question we use \cite{Nbook} but we adopt the notation and terminology from \cite[Section 6.5.1]{EH}.

Let $\mathcal{R}$ be a commutative ring with unity, and consider a $n\times m$ matrix $M$ whose entries are in the polynomial ring $\mathcal{R}[X]$ with $X$ a set of $l$ indeterminates $x_1, \dots, x_l$.
We will assume that $n\leq m$ to simplify notation, because otherwise we can transpose the matrix without changing the determinants of its sub-matrices.
For $k\in [n]:=\{1, \dots, n\}$, let $\mathcal{I}=\{r_j\}_{j=1}^k$ and $\mathcal{J}=\{c_j\}_{j=1}^k$ be two sequences such that
\[
1\leq r_1 < r_2 < \cdots < r_k \leq n \text{ and } 1\leq c_1 < c_2 < \cdots < c_k \leq m.
\]
Let $M[\mathcal{I;J}]$ denote the submatrix of a matrix $M$ induced by the rows with indices in $\mathcal{I}$ and columns with indices in $\mathcal{J}$.
The determinant of $M[\mathcal{I;J}]$ is called a $k$-{\it minor} of $M$.
We denote by $\minors_k(M)$ the set of all $k$-minors of $M$.

\begin{definition}
For $k\in[n]$, the $k$-{\it th} \emph{determinantal ideal of a matrix}  $M$ with entries in $\mathcal{R}[X]$ (or just \emph{ideal}, if it is clear from the context), denoted $I_k(M)$, is the ideal generated by $\minors_k(M)$.
\end{definition}

Let $I\subseteq \mathcal{R}[X]$ be an ideal in $\mathcal{R}[X]$.
The \emph{variety} $V(I)$ of $I$ is defined as the set of common roots between polynomials in $I$.
In several contexts it will be more convenient to consider an extension $\mathcal{P}$ of $\mathcal{R}$ to define the variety $$V^\mathcal{P}(I):=\left\{ {\bf a}\in \mathcal{P}^l : f({\bf a}) = 0 \text{ for all } f\in I \right\}.$$

The following result, which we shall use in Section \ref{sec:charideals} when we study determinantal ideals with one variable, shows the contention between varieties.

\begin{proposition}\cite{Nbook}\label{prop:chaininclusionideals}
Let $M$ be an $n\times m$ matrix with entries in $\mathcal{R}[X]$.
Then, it holds that
\[
\langle 1\rangle \supseteq I_1(M) \supseteq \cdots \supseteq I_n(M) \supseteq \langle 0\rangle
\]
and
\[
\emptyset\subseteq V^\mathcal{P}(I_1(M)) \subseteq \cdots \subseteq V^\mathcal{P}(I_n(M)) \subseteq \mathcal{R}^l.
\]
\end{proposition}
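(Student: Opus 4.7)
The plan is to prove the two chains separately, the first via cofactor (Laplace) expansion of minors and the second by the general principle that taking varieties reverses inclusions of ideals.

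First I would handle the ideal chain $\langle 1\rangle \supseteq I_1(M) \supseteq \cdots \supseteq I_n(M) \supseteq \langle 0\rangle$. The outer containments $\langle 1\rangle \supseteq I_1(M)$ and $I_n(M) \supseteq \langle 0\rangle$ are immediate, since $\langle 1\rangle = \mathcal{R}[X]$ and $\langle 0\rangle$ is the zero ideal. For the internal step $I_k(M) \supseteq I_{k+1}(M)$, it suffices to show that every generator of $I_{k+1}(M)$, i.e.\ every $(k{+}1)$-minor $\det M[\mathcal{I};\mathcal{J}]$, lies in $I_k(M)$. Fix such a minor and apply Laplace expansion along its first row: this writes $\det M[\mathcal{I};\mathcal{J}]$ as an $\mathcal{R}[X]$-linear combination of $k$-minors of $M$ (the signed cofactors), each of which is by definition a generator of $I_k(M)$. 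Hence $\det M[\mathcal{I};\mathcal{J}] \in I_k(M)$, and so $I_{k+1}(M) \subseteq I_k(M)$.

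Next I would derive the variety chain $\emptyset \subseteq V^\mathcal{P}(I_1(M)) \subseteq \cdots \subseteq V^\mathcal{P}(I_n(M)) \subseteq \mathcal{P}^l$ from the ideal chain. The outer inclusions are trivial. For the interior inclusion, I would invoke the general (and elementary) fact that if $I \subseteq J$ are ideals of $\mathcal{R}[X]$, then $V^\mathcal{P}(J) \subseteq V^\mathcal{P}(I)$: any tuple ${\bf a}\in\mathcal{P}^l$ at which every polynomial of $J$ vanishes automatically satisfies $f({\bf a})=0$ for every $f\in I\subseteq J$. Applying this with $I=I_{k+1}(M)\subseteq I_k(M)=J$ yields $V^\mathcal{P}(I_k(M)) \subseteq V^\mathcal{P}(I_{k+1}(M))$, which is the desired direction.

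There is no real obstacle here; the only thing to be careful about is the Laplace expansion argument, which requires that the minors appearing as cofactors of a $(k{+}1)$-minor of $M$ are indeed $k$-minors of $M$ itself (not merely of the submatrix $M[\mathcal{I};\mathcal{J}]$). This holds because any $k\times k$ submatrix of $M[\mathcal{I};\mathcal{J}]$ is also a $k\times k$ submatrix of $M$, obtained by deleting one of the rows in $\mathcal{I}$ and one of the columns in $\mathcal{J}$. With this observation the two chains follow at once.
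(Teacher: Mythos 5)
Your proof is correct: the Laplace-expansion argument showing each $(k{+}1)$-minor is an $\mathcal{R}[X]$-linear combination of $k$-minors of $M$, combined with the inclusion-reversing property of $V^\mathcal{P}$, is exactly the standard argument, and the paper itself gives no proof (it cites Northcott, where the same reasoning appears). The only remark worth adding is that the final containment in the variety chain should read $\mathcal{P}^l$ rather than $\mathcal{R}^l$ (a slip in the statement as printed, which your write-up silently corrects).
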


An ideal is said to be {\it trivial} or {\it unit} if it is equal to $\langle1\rangle$ ($=\mathcal{R}[X]$). The {\it algebraic co-rank} $\gamma(M)$ of $M$ is the maximum integer $k$ for which $I_k(M)$ is trivial.

It is worth noting the use of a new notation to refer to the following property of the ideals of two matrices.
This is the main underlying concept that will allow us investigate codeterminantal graphs.

\begin{definition}\label{def:codeterminatalmatrix}
Let $M,N$ be two $n\times m$ matrices with entries in $\mathcal{R}[X]$.
We say that $M$ and $N$ are {\it codeterminantal} if $I_k(M)=I_k(N)$ for all $k\in[n]$.
\end{definition}

We also provide some background on codeterminantal graphs for our results later on.

\begin{lemma}\cite{Nbook}
Let $M,N$ be two matrices with entries in $\mathcal{R}[X]$.
Then, $I_k(MN)\subseteq I_k(M)\cap I_k(N).$
\end{lemma}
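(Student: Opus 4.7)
The natural approach is to invoke the Cauchy-Binet formula, which is a polynomial identity and therefore holds over any commutative ring $\mathcal{R}[X]$. My plan is to show that every generator of $I_k(MN)$, that is, every $k$-minor of $MN$, can be written as an $\mathcal{R}[X]$-linear combination of products of a $k$-minor of $M$ and a $k$-minor of $N$; this places each such minor in the product ideal $I_k(M)\cdot I_k(N)$, which in turn is contained in $I_k(M)\cap I_k(N)$.

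More concretely, suppose $M$ is $n\times p$ and $N$ is $p\times m$, and fix row indices $\mathcal{I}\subseteq[n]$ and column indices $\mathcal{J}\subseteq[m]$ with $|\mathcal{I}|=|\mathcal{J}|=k$. First I would observe the elementary identity $(MN)[\mathcal{I};\mathcal{J}] = M[\mathcal{I};[p]]\cdot N[[p];\mathcal{J}]$, and then apply the Cauchy-Binet formula to obtain
\[
\det\bigl((MN)[\mathcal{I};\mathcal{J}]\bigr) \;=\; \sum_{\mathcal{K}} \det\bigl(M[\mathcal{I};\mathcal{K}]\bigr)\,\det\bigl(N[\mathcal{K};\mathcal{J}]\bigr),
\]
where $\mathcal{K}$ ranges over all $k$-subsets of $[p]$ (the sum is empty, hence zero, if $k>p$, and the claim is trivial in that case). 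Since this is a universal polynomial identity in the entries of $M$ and $N$, it is valid in the ring $\mathcal{R}[X]$.

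Next I would observe that each summand lies in $I_k(M)\cdot I_k(N)$: the factor $\det(M[\mathcal{I};\mathcal{K}])\in\minors_k(M)\subseteq I_k(M)$ and the factor $\det(N[\mathcal{K};\mathcal{J}])\in\minors_k(N)\subseteq I_k(N)$. Hence the whole sum, namely the $k$-minor $\det((MN)[\mathcal{I};\mathcal{J}])$, belongs to $I_k(M)\cdot I_k(N)$. Since this holds for every generator of $I_k(MN)$, we get $I_k(MN)\subseteq I_k(M)\cdot I_k(N)$. Finally, the general fact that $I\cdot J\subseteq I\cap J$ for any two ideals $I,J$ in a commutative ring (every element of $I\cdot J$ is a sum of products $ab$ with $a\in I$ and $b\in J$, and each such product is in both $I$ and $J$) yields the desired inclusion $I_k(MN)\subseteq I_k(M)\cap I_k(N)$.

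I do not anticipate a genuine obstacle here: the entire argument is a direct application of Cauchy-Binet. The only point that requires a small amount of care is that Cauchy-Binet is invoked in the polynomial ring $\mathcal{R}[X]$ rather than over a field, but since it is a formal identity valid over $\mathbb{Z}$ it transfers to any commutative ring by substitution. The conclusion $I\cdot J\subseteq I\cap J$ is standard and needs no comment beyond what is indicated above.
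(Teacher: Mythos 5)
Your proof is correct and is the standard argument: the paper itself gives no proof (it cites \cite{Nbook}), and the Cauchy--Binet expansion of each $k$-minor of $MN$ into products of $k$-minors of $M$ and $N$, followed by the inclusion $I_k(M)\cdot I_k(N)\subseteq I_k(M)\cap I_k(N)$, is exactly how the cited source establishes this lemma. Your handling of the edge case $k>p$ and the remark that Cauchy--Binet is a formal identity valid over any commutative ring, hence over $\mathcal{R}[X]$, are both appropriate and complete.
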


From which follows.

\begin{theorem}\cite{Nbook}\label{theo:equivalentcodeterminantal}
Let $M,N$ be two $n\times m$ matrices and suppose there exist $U,V,U',V'$ such that $M=U N V$ and $N = U' M V'$.
Then, $M$ and $N$ are codeterminantal.
\end{theorem}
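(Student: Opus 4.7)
The plan is to apply the preceding lemma ($I_k(MN)\subseteq I_k(M)\cap I_k(N)$) to each of the two hypothesized factorizations. The key observation is that the lemma extends to products of three matrices by associativity: writing $UNV = (UN)V$ and applying the lemma twice yields
\[
I_k(UNV)\;\subseteq\; I_k(UN)\cap I_k(V)\;\subseteq\; I_k(N),
\]
where the final inclusion uses the lemma once more on $I_k(UN)\subseteq I_k(U)\cap I_k(N)$. So from $M=UNV$ I would conclude $I_k(M)\subseteq I_k(N)$ for every $k\in[n]$.

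By the perfectly symmetric computation applied to the hypothesis $N=U'MV'$, I obtain the reverse inclusion $I_k(N)\subseteq I_k(M)$. Combining the two inclusions gives $I_k(M)=I_k(N)$ for all $k\in[n]$, which is exactly the definition of $M$ and $N$ being codeterminantal (Definition~\ref{def:codeterminatalmatrix}).

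There is essentially no obstacle here beyond a small bookkeeping step, namely verifying that the intermediate matrices $UN$ and $MV'$ have dimensions compatible with the lemma (i.e.\ that $k$-minors make sense for every $k\in[n]$). Since $M$ and $N$ are both $n\times m$ and the factorizations force $U,U'$ to be $n\times n$ and $V,V'$ to be $m\times m$, the intermediate products are again $n\times m$, so the lemma applies for every $k\in[n]$. No Gr\"obner basis computation, no appeal to the ring structure of $\mathcal{R}[X]$ beyond what the lemma already uses, and no case analysis on $k$ is required; the proof is a direct two-line deduction from the lemma.
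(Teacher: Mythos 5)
Your proof is correct and follows exactly the route the paper intends: the paper states the theorem with the remark ``From which follows,'' meaning it is deduced from the lemma $I_k(MN)\subseteq I_k(M)\cap I_k(N)$ in precisely the way you spell out (iterating the lemma on $M=UNV$ and $N=U'MV'$ to get the two inclusions $I_k(M)\subseteq I_k(N)$ and $I_k(N)\subseteq I_k(M)$). Your dimension check on $U,V,U',V'$ is a fine bit of added care, but there is no substantive difference from the paper's argument.
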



\begin{definition}
A matrix $M$ is said to be {\it equivalent} to $N$, denoted by $M\sim N$, if there exist invertible matrices $U,V$ with entries in $\mathcal{R}[X]$ such that $M=UNV$.
\end{definition}

\begin{proposition}\cite{Nbook}\label{prop:equiimpliescodeterminantal}
If $M$ and $N$ are equivalent matrices, then 
$M$ and $N$ are codeterminantal.
\end{proposition}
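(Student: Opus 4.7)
The plan is to derive this proposition as an immediate consequence of Theorem \ref{theo:equivalentcodeterminantal}, which states that if one can write simultaneously $M = UNV$ and $N = U'MV'$ (with no invertibility hypothesis on the factors), then $M$ and $N$ are codeterminantal. So the entire task reduces to producing the second factorization $N = U'MV'$ from the definition of equivalence.

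Concretely, from $M \sim N$ we obtain invertible matrices $U$ and $V$ over $\mathcal{R}[X]$ with $M = UNV$. Since $U$ and $V$ are invertible in the matrix ring over $\mathcal{R}[X]$, we may left-multiply by $U^{-1}$ and right-multiply by $V^{-1}$, which yields $N = U^{-1} M V^{-1}$. Setting $U' = U^{-1}$ and $V' = V^{-1}$, both conditions of Theorem \ref{theo:equivalentcodeterminantal} are satisfied, so $I_k(M) = I_k(N)$ for every $k \in [n]$, which is precisely the definition of codeterminantality (Definition \ref{def:codeterminatalmatrix}).

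The only subtle point — and really the only thing worth verifying — is that the inverses $U^{-1}$ and $V^{-1}$ exist as matrices with entries in $\mathcal{R}[X]$. This is guaranteed by the word \emph{invertible} in the definition of $\sim$: invertibility in the definition is meant over $\mathcal{R}[X]$, so $U^{-1}$ and $V^{-1}$ automatically have entries in $\mathcal{R}[X]$, and no passage to a field of fractions is needed. There is therefore no genuine obstacle here; the proposition is essentially a symmetry observation packaging the two-sided hypothesis of Theorem \ref{theo:equivalentcodeterminantal} into the more familiar (and symmetric) notion of matrix equivalence.
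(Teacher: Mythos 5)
Your argument is correct: equivalence gives $M = UNV$ with $U,V$ invertible over $\mathcal{R}[X]$, so $N = U^{-1}MV^{-1}$ supplies the second factorization and Theorem~\ref{theo:equivalentcodeterminantal} applies directly. This is exactly the route the paper intends (it states the proposition right after Theorem~\ref{theo:equivalentcodeterminantal} and the definition of equivalence, citing \cite{Nbook} rather than spelling out the proof), so nothing further is needed.
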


A matrix $M$ can be transformed to $N$ by applying {\it elementary row and column operations}:
\begin{enumerate}
  \item interchanging any two rows or any two columns,
  \item adding integer multiples of one row/column to another row/column,
  \item multiplying any row/column by $\pm 1$.
\end{enumerate}
These operations are performed by multiplying $M$ by invertible matrices with entries in $\mathcal{R}[X]$.

\begin{corollary}\cite{Nbook}\label{coro:elementaryoperationscodeterminantal}
If $M$ is obtained from $N$ by means of elementary row and column operations, then $M$ and $N$ are codeterminantal.
\end{corollary}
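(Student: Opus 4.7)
The plan is to realize each elementary row or column operation as left or right multiplication by an invertible matrix over $\mathcal{R}[X]$, and then invoke Proposition~\ref{prop:equiimpliescodeterminantal}. Concretely, I would first recall the standard elementary matrices: swapping two rows (resp.\ columns) of $N$ amounts to left-multiplying (resp.\ right-multiplying) by a permutation matrix; adding an integer multiple $c$ of row $j$ to row $i$ corresponds to left-multiplication by $I_n + c\, E_{ij}$ (and analogously on the right for columns); multiplying a row or column by $\pm 1$ corresponds to multiplication by a diagonal matrix with a single $-1$ on the diagonal. Each of these matrices has constant integer entries and determinant $\pm 1$, hence is invertible as a matrix over $\mathcal{R}[X]$ (its inverse is of the same shape).

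Next, I would chain these together: if $M$ is obtained from $N$ by a finite sequence of such row and column operations, we may group the row operations into a product $U$ of elementary matrices acting on the left and the column operations into a product $V$ acting on the right, yielding $M = U N V$ with $U$ and $V$ invertible over $\mathcal{R}[X]$. By the \textbf{Definition} of equivalence, this is exactly the statement $M \sim N$.

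Finally, Proposition~\ref{prop:equiimpliescodeterminantal} gives $I_k(M) = I_k(N)$ for all $k \in [n]$, which is the definition of codeterminantal. The only point that is even mildly non-routine is checking that the elementary matrices listed are invertible as matrices with entries in $\mathcal{R}[X]$ (not just invertible over some extension); but since they have determinant $\pm 1 \in \mathcal{R}$, their inverses also have entries in $\mathcal{R} \subseteq \mathcal{R}[X]$, so this is immediate. There is no genuine obstacle here: the corollary is essentially a direct unpacking of the preceding proposition once the elementary operations are identified with multiplication by invertible matrices.
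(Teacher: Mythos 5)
Your proof is correct and follows essentially the same route as the paper, which derives this corollary by noting that elementary row and column operations amount to multiplication by invertible matrices over $\mathcal{R}[X]$, so $M\sim N$, and then applying Proposition~\ref{prop:equiimpliescodeterminantal}. Your extra checks (determinant $\pm 1$, grouping interleaved operations into $M=UNV$) are fine and add nothing beyond the paper's intended argument.
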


In the particular case when the entries of the matrices are on a \emph{principal ideal domain} (PID), Proposition~\ref{prop:equiimpliescodeterminantal} can be improved.
Recall, for polynomial rings, $\mathcal{R}[x]$ is a PID if and only if $\mathcal{R}$ is a field.
It follows that we can not have a PID with more than one indeterminate.


\begin{proposition}\cite{Nbook}
Let $M,N$ be matrices with entries in a PID.
Then, $M$ and $N$ are equivalent if and only if $M$ and $N$ are codeterminantal.
\end{proposition}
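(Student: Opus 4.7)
The forward direction is already given by Proposition~\ref{prop:equiimpliescodeterminantal} (applied with the ring being a PID), so the whole content of the statement is the converse: codeterminantal implies equivalent. My plan is to route both matrices through their Smith normal forms.

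First, I would recall the standard fact that over a PID $\mathcal{R}$, every matrix $M$ is equivalent to a diagonal matrix $\SNF(M) = \diag(s_1,\dots,s_r,0,\dots,0)$ with $s_1 \mid s_2 \mid \cdots \mid s_r$, and that this form is unique up to units. Crucially, the invariant factors admit a determinantal characterization: if $d_k(M)$ denotes a greatest common divisor of the $k$-minors of $M$, then $s_1 s_2 \cdots s_k$ is an associate of $d_k(M)$. Since every ideal of a PID is principal and the $k$-minors generate $I_k(M)$, we have $I_k(M) = \langle d_k(M) \rangle$.

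Next, I would assume $M$ and $N$ are codeterminantal, so $I_k(M) = I_k(N)$ for every $k \in [n]$. This means $\langle d_k(M) \rangle = \langle d_k(N) \rangle$, hence $d_k(M)$ and $d_k(N)$ are associates. Dividing successive $d_k$'s, each invariant factor $s_k(M)$ is an associate of the corresponding $s_k(N)$. Therefore $\SNF(M)$ and $\SNF(N)$ differ only by unit multiples on the diagonal, which can be absorbed by left (or right) multiplication with an invertible diagonal matrix. Chaining the equivalences
\[
M \sim \SNF(M) \sim \SNF(N) \sim N
\]
yields $M \sim N$, as desired.

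The main technical point is the determinantal characterization of the invariant factors over a PID, i.e.\ identifying the generator of $I_k$ with the product $s_1\cdots s_k$; once that is in hand, everything else is transitivity of $\sim$ and absorbing units. There is nothing subtle about the reverse direction beyond this, and no issue arises from $M$ being rectangular since SNF and the determinantal ideals are defined in that generality.
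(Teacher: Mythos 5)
Your proof is correct: the forward direction is indeed Proposition~\ref{prop:equiimpliescodeterminantal}, and your converse via the Smith normal form and the determinantal description of the invariant factors (Theorems~\ref{teo:snf} and~\ref{theo:edt}) is exactly the standard route; the paper itself cites this result to Northcott without reproducing a proof, and the tools it states immediately afterward are precisely the ones you use. The rank bookkeeping and the absorption of unit diagonal factors into invertible matrices are handled correctly, so there is no gap.
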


\begin{theorem}\cite{jacobson}\label{teo:snf}
If $M$ is an $n\times m$ matrix of rank $r$ with entries in a PID, then $M$ is equivalent to a diagonal matrix
\[
diag(f_1(M),\dots,f_r(M),0,\dots,0),
\]
where $f_k(M)\neq0$ and $f_j(M)|f_k(M)$ for $j\leq k$.
\end{theorem}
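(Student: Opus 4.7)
The plan is to proceed by induction on $\min(n,m)$, showing that $M$ can be reduced via left and right multiplication by invertible matrices over the PID $\mathcal{R}$ to the required diagonal form. The base case, a nonzero row $(a_1,\dots,a_m)$, is handled by Bezout's identity: setting $d=\gcd(a_1,\dots,a_m)$, one writes $d=\sum_j u_j a_j$ and completes the vector $(u_1,\dots,u_m)$ to an $m\times m$ invertible matrix over $\mathcal{R}$ whose first column is $(u_1,\dots,u_m)^{\top}$, transforming the row to $(d,0,\dots,0)$.

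For the inductive step, exploit that every PID is a UFD and define $\ell(a)$ as the number of irreducible factors of a nonzero $a\in\mathcal{R}$ counted with multiplicity. Pick an entry of $M$ with minimum $\ell$-value and move it to position $(1,1)$ via row and column swaps. If some $M_{1j}$ is not divisible by $M_{11}$, apply a Bezout transformation as above to columns $1$ and $j$ so that the new $(1,1)$-entry is $\gcd(M_{11},M_{1j})$, a proper divisor of $M_{11}$ with strictly smaller $\ell$-value; the analogous step clears the first column. Since $\ell$ is bounded below by $0$, the process terminates with a matrix whose first row and column are zero except at $(1,1)$. If the current $M_{11}$ fails to divide some entry $M_{ij}$ with $i,j>1$, add row $i$ to row $1$ to reintroduce a non-divisible entry in the first row and repeat; each cycle again strictly decreases $\ell(M_{11})$, so this outer loop also terminates, leaving $f_1:=M_{11}$ dividing every entry of the matrix.

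The resulting matrix has block form $\operatorname{diag}(f_1, M')$ where $f_1\mid M'_{ij}$ for all $i,j$. Applying the induction hypothesis to $M'$ yields $\operatorname{diag}(f_2,\dots,f_r,0,\dots,0)$ with $f_2\mid\cdots\mid f_r$, and $f_1\mid f_2$ follows because the property ``every entry lies in the ideal $(f_1)$'' is preserved under left/right multiplication by invertible matrices (the entries of $UM'V$ are $\mathcal{R}$-linear combinations of entries of $M'$). Finally, $r$ matches the rank of $M$ since equivalence preserves rank. The delicate point throughout is the termination argument, which is handled uniformly by the strict decrease of the $\ell$-measure; this is where the PID hypothesis, supplying simultaneously Bezout's identity and unique factorization, is essential.
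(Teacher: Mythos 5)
The paper does not prove this statement; it is quoted from Jacobson's \emph{Basic Algebra I}, and your argument is essentially the standard proof given there: reduce by row and column operations, using Bezout steps on pairs of entries and the length function $\ell$ (number of irreducible factors, available since a PID is a UFD) to force termination, then split off $f_1$ and induct, with the divisibility $f_1\mid f_2$ preserved because entries of $UM'V$ are $\mathcal{R}$-linear combinations of entries of $M'$. Your proof is correct; the only cosmetic remark is that in the base case you need not invoke the completion of a unimodular row to an invertible matrix (a heavier fact), since the same pairwise $2\times 2$ Bezout transformations you use later already reduce a single row to $(d,0,\dots,0)$.
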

The diagonal matrix obtained in Theorem~\ref{teo:snf} is known as {\it Smith normal form} (SNF) of $M$, and the elements in its diagonal are called {\it invariant factors}. The SNF of matrices over principal ideal domains such as $\mathbb{Z}$ and $\mathbb{Q}[x]$ have many applications in algebraic group theory, combinatorics, homology groups, integer programming, lattices, linear Diophantine equations, system theory, and analysis of cryptosystems \cite{cohen,kannan,schrijver,stanley}.

In the last 30 years, the SNF of integer matrices of graphs have been of great interest since it describes the Abelian group obtained from the cokernel. If we consider an $m\times n$ matrix $M$ with integer entries as a linear map $M:\mathbb{Z}^n\rightarrow \mathbb{Z}^m$, recall that the {\it cokernel} of $M$ is the quotient module $\mathbb{Z}^{m}/{\rm Im}\, M$.
This finitely generated Abelian group becomes a graph invariant when we take the matrix $M$ to be a matrix associated with the graph.
For instance, the cokernel of $A(G)$ is known as the {\it Smith group} of $G$ and is denoted $S(G)$, and the torsion part of the cokernel of $L(G)$ is known as the {\it critical group} $K(G)$ of $G$ (also known as {\it sandpile group}).
The structure theorem for finitely generated abelian groups implies the cokernel of $M$ can be described as:
$coker(M)\cong \mathbb{Z}_{f_1(M)} \oplus \mathbb{Z}_{f_2(M)} \oplus \cdots \oplus\mathbb{Z}_{f_{r}(M)} \oplus \mathbb{Z}^{m-r}$,
where $r$ is the rank of $M$ and $f_1(M), f_2(M), \ldots, f_{r}(M)$ are the \emph{invariant factors} of the SNF of the integer matrix $M$.
Much of the research done in this direction has been motivated by the sandpile group and its multiple relations with many other branches like algebraic geometry, hyperplane arrangements, parking functions to mention few.
We refer the reader interested in this topic to the book \cite{klivans}.

Little is known about Smith normal forms of distance matrices. In \cite{HW}, the Smith normal forms of the distance matrices were determined for trees, wheels, cycles, and complements of cycles and are partially determined for complete multipartite graphs. In \cite{BK}, the Smith normal form of the distance matrices of unicyclic graphs and of the wheel graph with trees attached to each
vertex were obtained.

A useful way to compute the invariant factors is given by the following result, which we shall use to prove our next result (Proposition \ref{ourresult1}).

\begin{theorem}[Elementary divisors theorem]\label{theo:edt}\cite{jacobson}
Let $M$ be an $n\times m$ matrix with entries in a PID.
Then the $k$-{\it th} invariant factor $f_k(M)$ of $M$ is equal to $\Delta_k(M)/ \Delta_{k-1}(M)$, where $\Delta_k(M)$ is the {\it greatest common divisor} of the $k$-minors of $M$ and $\Delta_0(M)=1$.
\end{theorem}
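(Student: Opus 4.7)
The plan is to reduce the problem to a direct computation on a diagonal matrix by invoking the Smith normal form together with the equivalence-codeterminantal correspondence, and then to compute the gcd of minors of that diagonal matrix explicitly.

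First, I would invoke Theorem~\ref{teo:snf} to write $M \sim D$, where $D = \diag(f_1(M), \ldots, f_r(M), 0, \ldots, 0)$ with $f_1(M) \mid f_2(M) \mid \cdots \mid f_r(M)$. By Proposition~\ref{prop:equiimpliescodeterminantal}, the matrices $M$ and $D$ are codeterminantal, so $I_k(M) = I_k(D)$ for every $k \in [n]$. Because we are working over a PID, every such ideal is principal, and its generator coincides (up to a unit) with the gcd of its generating set, that is, with $\Delta_k$. Hence $\Delta_k(M)$ and $\Delta_k(D)$ agree up to a unit, and it suffices to compute $\Delta_k(D)$.

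Next, I would compute $\Delta_k(D)$ by hand. Since $D$ is diagonal, a $k$-minor $\det D[\mathcal{I};\mathcal{J}]$ is nonzero only when $\mathcal{I} = \mathcal{J} = \{i_1 < \cdots < i_k\} \subseteq [r]$, in which case it equals the product $f_{i_1}(M) \cdots f_{i_k}(M)$. The key claim is that the gcd of all such products is $f_1(M) \cdots f_k(M)$. The ``member of the ideal'' direction is immediate, since $f_1(M) \cdots f_k(M)$ is itself one of these minors (take $i_j = j$). For the other direction, since $j \le i_j$ and the invariant factors form a divisibility chain, $f_j(M) \mid f_{i_j}(M)$ for each $j$, so $f_1(M) \cdots f_k(M)$ divides every minor $f_{i_1}(M) \cdots f_{i_k}(M)$ and hence divides their gcd. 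Combining the two inclusions gives $\Delta_k(M) = f_1(M) \cdots f_k(M)$ up to units, and the stated formula $f_k(M) = \Delta_k(M)/\Delta_{k-1}(M)$ follows, with the convention $\Delta_0(M) = 1$ covering the base case $k=1$.

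The main technical point — more a subtlety than a real obstacle — is the ``divides every minor'' step: one must use the full force of the divisibility chain of invariant factors to control products over arbitrary increasing index sequences, not just over the initial segment. All equalities in the argument are to be understood up to units in the PID, as the quantities $\Delta_k$ and the invariant factors are only well-defined up to associates. Everything else follows essentially mechanically from the SNF reduction and the equivalence-codeterminantal correspondence already established earlier in the paper.
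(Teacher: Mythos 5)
Your proof is correct, and it is essentially the standard argument: the paper itself gives no proof of this statement (it is quoted from \cite{jacobson}), and the classical proof proceeds exactly as you do, by noting that $\Delta_k$ is unchanged under equivalence (your appeal to Theorem~\ref{teo:snf} and Proposition~\ref{prop:equiimpliescodeterminantal}, plus principality of ideals in a PID) and then computing the gcd of the $k$-minors of the diagonal form via the divisibility chain. The only point worth making explicit is that the formula is asserted for $k\le r=\operatorname{rank}(M)$, where $\Delta_{k-1}\neq 0$, which your computation $\Delta_k=f_1\cdots f_k$ (up to units) handles; your use of Theorem~\ref{teo:snf} is not circular, since SNF existence is proved independently of the minor-gcd characterization.
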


Note that when the ring is a PID, the $k${\it-th} determinantal ideal of $M$ is generated by $\Delta_k(M)$.
On the other hand, when the ring is not a PID, we will be interested in finding a minimal representation of the determinantal ideals like the obtained from Gr\"obner bases \cite{EH}.
Observe also that the worst case complexity of computing Gr\"obner bases in $\mathbb{Q}[x_1,\dots,x_l]$ is double exponential \cite{MaMe}, meanwhile computing the SNF of a (polynomial) matrix is performed in polynomial time \cite{kannan,kannan1}.

We are interested in studying the determinantal ideals of matrices associated to graphs with entries in polynomial rings over commutative rings or just commutative rings.

In this work we consider simple finite connected graphs. Let $G = (V,E)$ be a graph with vertex set $V=\{v_1, \dots, v_n\}$ and $E$ its edge set.
In the following, $\deg(G)$ denote the diagonal matrix containing the degrees of the vertices of $G$ in the diagonal. The {\it transmission} of a vertex $v_i$ of $G$ is the sum of the distances from $v_i$ to all other vertices, and we denote by $T(G)$ to the diagonal matrix of vertex transmissions.
In the following, $A(G)$ and $D(G)$ denote the {\it adjacency} and {\it distance} matrices of graph $G$ with $n$ vertices, respectively.
In this way, the {\it Laplacian} matrix $L(G)$ is equal to $\deg(G)-A(G)$ and the {\it distance Laplacian} matrix, denoted by $F(G)$\footnote{In the literature the notation $D^L$ is commonly used for the distance Laplacian matrix, but in the manuscript we have chosen $F$, instead, to simplify the notation.}, is $T(G)-D(G)$.

\begin{definition}\label{def:polynomialmat}
Given a graph $G$ with $n$ vertices, a set of indeterminates $X=\{x_u \, : \, u\in V(G)\}$ and a inderterminate $x$, we define the following polynomial matrices:
\begin{itemize}
    \item $A_X(G)=\diag(x_1, \dots,x_n)-A(G)$,
    \item $D_X(G)=\diag(x_1, \dots,x_n)-D(G)$,
    \item $A_x(G)=x I_n-A(G)$,
    \item $L_x(G)=xI_n-L(G)$,
    \item $D_x(G)=xI_n-D(G)$,
    \item $F_x(G)=xI_n-F(G)$,
\end{itemize}
\end{definition}
where $I_n$ denotes the identity matrix of size $n\times n$.

The determinantal ideals associated to the matrices $A_X$ and $D_X$ have been studied in \cite{alflin,at,AV,AV1,AVV,corrval} under the name of {\it critical ideals} and {\it distance ideals}, respectively.
On the other hand, the univariate determinantal ideals of $A_x$ are known as {\it characteristic ideals}.
The first reference to a characteristic ideal might be found in \cite{mccoy} dealing with the $n${\it -th} determinantal ideal of matrices of the form $M_x:=xI_n-M$.
We follow \cite{mccoy} to name {\it Laplacian characteristic ideals}, {\it distance characteristic ideals} and {\it distance Laplacian characteristic ideals} to the univariate determinantal ideals of the matrices $L_x$, $D_x$ and $F_x$, respectively.

Let us include more notation in order to clearly know the (polynomial) ring in which the determinantal ideal is defined; for that we shall use an additional superscript in the ideal.
Let $M(G)$ be one of the integer matrix associated to a graph $G$ introduced above, we will denote by $I^\mathcal{R}_k(M_X(G))$, the $k${\it -th} determinantal ideal of the matrix $M_X(G)=\diag(x_1,\dots,x_n)-M(G)$, which is contained in the polynomial ring $\mathcal{R}[X]$.
In the univariate case we will use $I^\mathcal{R}_k(M_x(G))\subseteq \mathcal{R}[x]$, and when the matrix has no indeterminate we use $I^\mathcal{R}_k(M(G))\subseteq \mathcal{R}$.
Similarly, $\gamma_\mathcal{R}(M_X)$ denote the number of trivial determinantal ideals under $\mathcal{R}[X]$.
We will be mainly interested when $\mathcal{R}$ is either $\mathbb{R}$, $\mathbb{Q}$ or $\mathbb{Z}$.
Subtle differences appear; for example, the ideal $\left<2,x\right>$ is not principal in $\mathbb{Z}[x]$, but is trivial in $\mathbb{Q}[x]$.

The next result is useful  when the ideal is defined over a PID. 

\begin{proposition}\label{ourresult1}
Let $\mathcal{R}[x]$ be a PID.
Let $M(G)$ be either of the adjacency, Laplacian, distance or distance Laplacian matrix of $G$. Then, the $k$-determinental ideal $I^\mathcal{R}_k\left(M_x(G)\right)$ of the graph $G$ is isomorphic to the $k$-determinental ideal of the Smith normal form of $M_x(G)$.
Moreover, $I^\mathcal{R}_k\left(M_x(G)\right)$ is generated by $\Delta_k\left(M_x(G)\right)$, the gcd of the $k${\it-th} minors of $M_x(G)$.
\end{proposition}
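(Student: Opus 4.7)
The plan is to assemble the claim directly from the machinery already established in the excerpt; the statement is essentially a bookkeeping result, so the work is in lining up the right previous propositions.

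First I would invoke Theorem~\ref{teo:snf}. Since $\mathcal{R}[x]$ is a PID by hypothesis, and $M_x(G)$ is a square matrix with entries in $\mathcal{R}[x]$ (for each of the four choices of $M(G)$, the entries are integers plus a single copy of $x$ on the diagonal), the theorem produces invertible $U,V\in \mathcal{R}[x]^{n\times n}$ such that $M_x(G) = U\cdot \SNF(M_x(G))\cdot V$. In particular, $M_x(G)\sim \SNF(M_x(G))$ in the sense of Definition preceding Proposition~\ref{prop:equiimpliescodeterminantal}.

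Next I would apply Proposition~\ref{prop:equiimpliescodeterminantal}: equivalent matrices are codeterminantal. Consequently, for every $k\in[n]$,
\[
I^{\mathcal{R}}_k(M_x(G)) \;=\; I^{\mathcal{R}}_k(\SNF(M_x(G))),
\]
which is the first assertion (read with equality of ideals in $\mathcal{R}[x]$; this is the ``isomorphism'' as $\mathcal{R}[x]$-modules that is claimed).

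For the ``moreover'' clause I would combine two facts. On the one hand, since $\mathcal{R}[x]$ is a PID, the $k$-th determinantal ideal of any matrix is principal; explicitly, $I^{\mathcal{R}}_k(M_x(G))$ is generated by any gcd of the $k$-minors, so it equals $\langle \Delta_k(M_x(G))\rangle$ by definition of $\Delta_k$ in Theorem~\ref{theo:edt}. On the other hand, to see this is consistent with the SNF description, one observes that the $k$-minors of the diagonal matrix $\SNF(M_x(G)) = \diag(f_1,\dots,f_r,0,\dots,0)$ are exactly the products $f_{i_1}\cdots f_{i_k}$ with $i_1<\dots<i_k$; the divisibility chain $f_1\mid f_2\mid\cdots\mid f_r$ from Theorem~\ref{teo:snf} forces $f_1 f_2\cdots f_k$ to divide every such product, so $\Delta_k(\SNF(M_x(G))) = f_1\cdots f_k$. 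Hence $\Delta_k(M_x(G))= f_1(M_x(G))\cdots f_k(M_x(G))$ up to a unit and generates $I^{\mathcal{R}}_k(M_x(G))$, as required.

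There is no real obstacle here: every ingredient (PID hypothesis, SNF existence, equivalence implies codeterminantality, and the elementary divisors theorem) is already on the table. The only subtlety worth flagging in the write-up is clarifying what ``isomorphic'' means for the two ideals, since literally they are equal ideals of $\mathcal{R}[x]$; phrasing it as equality (or as an isomorphism of principal $\mathcal{R}[x]$-modules generated by the same gcd) is the cleanest way to avoid ambiguity.
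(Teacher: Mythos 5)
Your proposal is correct and follows essentially the same route as the paper: existence of the Smith normal form over the PID $\mathcal{R}[x]$ (Theorem~\ref{teo:snf}), equivalence implying equality of determinantal ideals (Proposition~\ref{prop:equiimpliescodeterminantal}), and the fact that in a PID the ideal of $k$-minors is principal and generated by $\Delta_k$ via Theorem~\ref{theo:edt}. Your write-up is in fact slightly more explicit than the paper's (verifying $\Delta_k(\SNF(M_x(G)))=f_1\cdots f_k$ and clarifying that ``isomorphic'' should be read as equality of ideals), but it is the same argument.
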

\begin{proof}
 If $M$ is a matrix with entries in a PID $\mathcal{R}[x]$, then $M$ is equivalent to its SNF, a diagonal matrix $\text{diag}(d_1\ldots, d_r, 0, \ldots, 0)$ of rank $r$. And $d_k = \Delta_k / \Delta_{k-1}$, where $\Delta_k$ is the gcd of the $k$-minors of $M$, see Theorem~\ref{theo:edt}. When restrict our studies to a PID, say a ring of polynomials with coefficients in reals, the $k$-th determinental ideal is principal, that is, it is generated by only one element: $\Delta_k$.
\end{proof}

By the above result, we can apply elementary operations in the matrix without changing the determinantal ideals in order to simplify the matrix and obtain the generator; this is easier than to calculate the Gr\"obner basis of all $k$-minors.

Let $M$ be a $n\times n$ integer matrix and let $M_X$ be the polynomial matrix $\diag(x_1,\dots,x_n)-M$.
We can recover the invariant factors of $M$ from the determinantal ideals of $M_X$ as described in the following result.
\begin{proposition}\label{prop:evalmultiplevariables}
Let $\mathcal{R}$ be a PID, $M$ an $n\times n$ matrix with entries in $\mathcal{R}$, ${\bf c}$ a row vector in $\mathcal{R}^n$ and $X=\{x_1,\dots,x_n\}$ a set of indeterminates.
Let $M_X=\diag(x_1,\dots,x_n)-M$.
Then the ideal $I^\mathcal{R}_k(M_X)$ evaluated at $X={\bf c}$ is generated by $\Delta_k({\bf c}I_n-M)$, the {\it gcd} of the $k$-minors of ${\bf c}I_n-M$ over $\mathcal{R}$.
\end{proposition}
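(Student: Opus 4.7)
The plan is to invoke the evaluation ring homomorphism $\varphi_{\mathbf{c}}\colon \mathcal{R}[X]\to \mathcal{R}$ defined by $x_i\mapsto c_i$, and to trace what it does to the matrix $M_X$, to its $k$-minors, and finally to the ideal they generate. First I would set up the framework: by ``the ideal $I^\mathcal{R}_k(M_X)$ evaluated at $X=\mathbf{c}$'' I mean the image $\varphi_{\mathbf{c}}(I^\mathcal{R}_k(M_X))\cdot \mathcal{R}$, which is an ideal of $\mathcal{R}$ because $\varphi_{\mathbf{c}}$ is a surjective ring homomorphism.

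Second, I would observe that $\varphi_{\mathbf{c}}$ applied entrywise to $M_X=\diag(x_1,\dots,x_n)-M$ yields $\diag(c_1,\dots,c_n)-M$, which is the matrix denoted $\mathbf{c}I_n-M$ in the statement. Because any $k$-minor is a polynomial (in fact a signed sum of products) in the entries of a matrix, and because $\varphi_{\mathbf{c}}$ is a ring homomorphism, the image of a $k$-minor of $M_X$ under $\varphi_{\mathbf{c}}$ is precisely the corresponding $k$-minor of $\mathbf{c}I_n-M$. Consequently $\varphi_{\mathbf{c}}(\minors_k(M_X))=\minors_k(\mathbf{c}I_n-M)$.

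Third, since $I^\mathcal{R}_k(M_X)$ is by definition generated by $\minors_k(M_X)$, and the image of an ideal under a surjective homomorphism is the ideal generated by the images of a generating set, we obtain
\[
\varphi_{\mathbf{c}}(I^\mathcal{R}_k(M_X)) \;=\; \big\langle \minors_k(\mathbf{c}I_n - M)\big\rangle \;=\; I^\mathcal{R}_k(\mathbf{c}I_n - M)\subseteq \mathcal{R}.
\]
Because $\mathcal{R}$ is a PID, Theorem~\ref{theo:edt} (or equivalently the fact that in a PID every finitely generated ideal is principal and generated by the gcd of any generating set) tells us this ideal is generated by $\Delta_k(\mathbf{c}I_n-M)$, as claimed.

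There is no real obstacle here beyond being careful about what ``evaluation of an ideal'' means; the argument is essentially the functoriality of forming $k$-minors under ring homomorphisms, combined with the principal-ideal property of $\mathcal{R}$. If one wishes to be pedantic, one should verify that $\varphi_{\mathbf{c}}$ sends the generating set $\minors_k(M_X)$ onto $\minors_k(\mathbf{c}I_n-M)$ (and not merely into it), which is immediate because every row/column index subset $(\mathcal{I},\mathcal{J})$ gives rise to minors on both sides that correspond under $\varphi_{\mathbf{c}}$.
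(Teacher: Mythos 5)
Your proof is correct and follows essentially the same route as the paper: evaluating the ideal at $X=\mathbf{c}$ yields the ideal generated by the $k$-minors of $\mathbf{c}I_n-M$, which in the PID $\mathcal{R}$ is principal and generated by their gcd. You simply make explicit the evaluation homomorphism and the fact that images of generators generate the image ideal, which the paper leaves implicit.
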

\begin{proof}
When the determinantal ideal $I^\mathcal{R}_k(M_X)$ is evaluated at $X={\bf c}$, the ideal obtained is the generated by the $k$-minors of ${\bf c}I_n-M$.
Since $\mathcal{R}$ is a PID, then the evaluated ideal is principal and generated by the gcd of the $k$-minors of $M$.
\end{proof}

Note that the determinantal ideals of a (polynomial) matrix $M$ and its negative $-M$ are the same.
Thus from the determinantal ideals of matrix $A_X(G)$, we can recover the SNF of the adjacency matrix of $G$.
We can obtain many useful variants of the previous result, for example, by evaluating the determinantal ideals of $A_X(G)$ at $X=\deg(G)$, we can recover the SNF of the Laplacian matrix.

An application of Proposition \ref{prop:evalmultiplevariables} will be shown in Section \ref{sec:graphsdeterminedbytheirdeterminantalideals} to prove that some families of graphs are determined by the SNF of
the distance Laplacian matrix.

The next corollary shows one particular case which is useful for univariate determinantal ideals and when the diagonal of the matrix is constant. We shall use it in Section \ref{sec:charideals}.

\begin{corollary}\label{coro:evalunideterminantal}\label{theo:determinatalevaluatedsingleind}
Let $\mathcal{R}$ be a PID, $M$ an $n\times n$ matrix with entries in $\mathcal{R}$ and let $c\in\mathcal{R}$.
Let $M_x=xI_n-M$.
Then the ideal $I^\mathcal{R}_k(M_x)$ evaluated at $x=c$ is generated by $\Delta_k(cI_n-M)$, the {\it gcd} of the $k$-minors of $M$ over $\mathcal{R}$.
\end{corollary}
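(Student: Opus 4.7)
The plan is to obtain this corollary as the univariate specialization of Proposition~\ref{prop:evalmultiplevariables}. The statement of Corollary~\ref{coro:evalunideterminantal} differs from the preceding proposition only in that the diagonal of the polynomial matrix is the single indeterminate $x$ (rather than independent indeterminates $x_1,\dots,x_n$) and the evaluation is at a scalar $c$ rather than a row vector $\mathbf{c}$. So the main task is to interpret the evaluation correctly and then verify that the chain of reasoning in Proposition~\ref{prop:evalmultiplevariables} carries over verbatim.

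First I would fix the evaluation homomorphism $\phi_c\colon \mathcal{R}[x]\to \mathcal{R}$ defined by $x\mapsto c$; the phrase ``the ideal $I^\mathcal{R}_k(M_x)$ evaluated at $x=c$'' should be read as the image ideal $\phi_c\bigl(I^\mathcal{R}_k(M_x)\bigr)\subseteq \mathcal{R}$. Next I would recall that, by definition, $I^\mathcal{R}_k(M_x)$ is generated in $\mathcal{R}[x]$ by $\minors_k(M_x)=\minors_k(xI_n-M)$. Since $\phi_c$ is a ring homomorphism, it sends this generating set to $\minors_k(cI_n-M)$, so $\phi_c\bigl(I^\mathcal{R}_k(M_x)\bigr)$ is exactly the ideal of $\mathcal{R}$ generated by the $k$-minors of $cI_n-M$.

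To finish, I would use that $\mathcal{R}$ is a PID: every finitely generated ideal in a PID is principal, generated by the gcd of any set of generators. Applied to the $k$-minors of $cI_n-M$, this identifies the evaluated ideal with $\langle \Delta_k(cI_n-M)\rangle$, which is the claim.

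I do not expect any substantive obstacle; the only point that deserves care is distinguishing ``ideal of $\mathcal{R}[x]$ evaluated at $c$'' (the homomorphic image, which is what we want) from any pointwise interpretation. One could alternatively derive the corollary directly from Proposition~\ref{prop:evalmultiplevariables} by first specializing $x_1=\cdots=x_n=x$ to pass from $M_X$ to $M_x$ and then setting $\mathbf{c}=(c,\dots,c)$, but writing the short argument via $\phi_c$ seems cleaner and makes the hypothesis ``$\mathcal{R}$ is a PID'' play its intended role only at the very last step.
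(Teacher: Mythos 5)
Your proposal is correct and follows essentially the same route as the paper: the paper presents this statement as an immediate special case of Proposition~\ref{prop:evalmultiplevariables}, whose proof is exactly your argument — evaluation sends the generating $k$-minors of $M_x$ to the $k$-minors of $cI_n-M$, and since $\mathcal{R}$ is a PID the resulting ideal is principal, generated by $\Delta_k(cI_n-M)$. Your explicit use of the evaluation homomorphism $\phi_c$ just makes the paper's (terser) reasoning precise.
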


This is more restrictive, from the determinantal ideals of $A_x(G)$ we can recover the SNF of $A(G)$, and the $G$ is $r$-regular, by evaluating the determinantal ideals at $x=r$, we recover the SNF of $L(G)$.
This kind of evaluations will be used in the rest of the paper to make connections between determinantal ideals (in $\mathbb{Z}[X]$ or $\mathbb{Z}[x]$) and SNFs.

In the following subsections we focus on the determinantal ideals of the polynomial matrices mentioned in Definition \ref{def:polynomialmat}.

\subsection{Critical ideals} 

The determinantal ideals $I_k^\mathcal{R}(A_X(G))$ were defined and studied in \cite{corrval} as a generalization of the critical group, which is the torsion part of the cokernel of the Laplacian matrix.

\begin{example}\label{example:grobner1}
Consider the cycle with 4 vertices. Then
\[
A_X(C_4)=
\begin{bmatrix}
    x_0 & -1 &  0 & -1\\
    -1 & x_1 & -1 &  0\\
     0 & -1 & x_2 & -1\\
    -1 &  0 & -1 & x_3\\
\end{bmatrix}.
\]
Below we give the Gr\"obner bases of the critical ideals over $\mathbb{Z}[X_{C_4}]$:
\[
I^{\mathbb{Z}}_k(A_X(C_4))=
\begin{cases}
    \langle 1\rangle & \text{if } i \leq 2,\\
    \langle x_0 + x_2, x_1 + x_3, x_2x_3\rangle & \text{if } i =3,\\
    \langle x_0x_1x_2x_3 - x_0x_1 - x_0x_3 - x_1x_2 - x_2x_3\rangle & \text{if } i =4.\\
\end{cases}
\]
\end{example}

Critical ideals generalize the SNF of several matrices associated to graphs like the Laplacian, signless Laplacian and adjacency matrices of graphs, see references  \cite{AV,corrval}.
Thus, objects like the critical group and the Smith group can be recovered from critical ideals.
The following example illustrates it.
\begin{example}
Consider again Example~\ref{example:grobner1}.
By evaluating the critical ideals over $\mathbb{Z}[X]$ of $C_4$ at $X_{C_4}=\deg(C_4)=(2,2,2,2)$, we obtain that the gcd of the $k$-minors of $L(G)$ are $\Delta_k(L(C_4))=1$ for $k\leq 2$,  $\Delta_3(L(C_4))=4$ and  $\Delta_4(L(C_4))=0$.
Thus the critical group $K(C_4)\cong \mathbb{Z}_4$.
An evaluation of the critical ideals of $C_4$ at the zero vector gives us that $\Delta_k(A(C_4))=1$ for $k\leq 2$, and  $\Delta_k(A(C_4))=0$ for $k\in\{3,4\}$.
Therefore, the Smith group $S(C_4)\cong \mathbb{Z}^2$.
\end{example}

Note that the varieties of critical ideals can also be regarded as a generalization of the Laplacian and adjacency spectra of $G$.

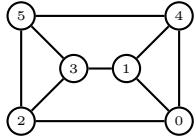
\begin{figure}[ht]
    \centering
    \begin{tikzpicture}[scale=.7,thick]
	\tikzstyle{every node}=[minimum width=0pt, inner sep=2pt, circle]
	\draw (1,0) node[draw] (0) {\tiny 0};
	\draw (0,1) node[draw] (1) {\tiny 1};
	\draw (-2,0) node[draw] (2) {\tiny 2};
	\draw (-1,1) node[draw] (3) {\tiny 3};
	\draw (1,2) node[draw] (4) {\tiny 4};
	\draw (-2,2) node[draw] (5) {\tiny 5};
	\draw  (0) edge (1);
	\draw  (0) edge (2);
	\draw  (0) edge (4);
	\draw  (1) edge (3);
	\draw  (1) edge (4);
	\draw  (2) edge (3);
	\draw  (2) edge (5);
	\draw  (3) edge (5);
	\draw  (4) edge (5);
    \end{tikzpicture}
    \caption{A graph whose fourth critical ideal is special.}
    \label{fig:varietiesofcriticalideals}
\end{figure}

Not much is known on the varieties of the critical and distance ideals.
An interesting example from \cite{alflin} is that a Gr\"obner base of the fourth critical ideal of the graph $G$ in Figure~\ref{fig:varietiesofcriticalideals} is given by
$I_4^\mathbb{Z}(A_X(G))=\langle x_0 + x_5 - 1, x_1 + x_5 - 1, x_2 - x_5, x_3 - x_5, x_4 + x_5 - 1, x_5^2 - x_5 - 1\rangle$.
Here, a quadratic polynomial is one of the generators of the ideal, which could allow the existence of complex solutions in the varieties of critical ideals (in $\mathbb{Z}[X]$) of undirected graphs.
On the other hand, in \cite{alflin} it was conjectured that the minimum rank $\mr_\mathbb{R}(G)\leq\gamma_\mathbb{R}(A_X(G))$.
This conjecture is related with the varieties of the critical ideals since if for $k=\gamma_\mathbb{R}(A_X(G))$, the variety $V\left(I_{k+1}^\mathbb{R}(A_X(G))\right)$ is not empty and is contained in $\mathbb{R}$, then $\mr_\mathbb{R}(G)\leq\gamma_\mathbb{R}(A_X(G))$.
This conjecture is known to be true \cite{alflin} for all graphs with at most 7 vertices, and \cite{alfaro} for all graphs with minimum rank at most 3.

One property of critical ideals is that they are monotone induced \cite{corrval}, that is, if $H$ is an induced subgraph of $G$, then, for each $k\in\left[|V(H)|\right]$, the $k${\it -th} critical ideal of $H$ is included in the $k${\it -th} critical ideal of $G$.
This behaviour is not true in general for the critical group, for example the critical group of $K_4$ is $\mathbb{Z}_4^2$ meanwhile the critical group of $K_5$ is $\mathbb{Z}_5^3$.
This property of the critical ideals can be used \cite{AV,AV1,AVV} to find a characterization of $\mathcal K_k$, the graphs whose critical group have $k$ invariant factors equal to 1.
The first result in this direction appeared when D. Lorenzini and, independently, A. Vince noticed in \cite{lorenzini1991,vince} that the graphs in $\mathcal K_1$, the graphs having critical group with one invariant factor equal to 1, consist only of the complete graphs. It is still an open problem to characterize graphs in $\mathcal{K}_k$ \cite{merino}.
A complete characterization of $\mathcal K_2$ was obtained in \cite{AV} using critical ideals defined over $\mathbb{Z}[X]$.
However, the characterization of the graphs in $\mathcal K_3$ seems to be a hard problem \cite{AV1}.
For digraphs case, the characterization of digraphs with at most 1 invariant factor equal to 1 was completely obtained in \cite{AVV}; this characterization turned out to be the same for digraphs with minimum rank equal to 1 and for digraphs with zero-forcing number equal to $n-1$. 

Let $\sigma$ be a permutation on $V(G)$.
Then $\sigma G$ is a graph on $V(G)$ such that $\{i,j\}\in E(G)$ if and only if $\{ \sigma(i),\sigma(j)\}\in E(\sigma G)$.
Two graphs $G$ and $G'$ on the same vertex set $V$ are called $n$-{\it cospectral} if there exists a permutation $\sigma$ on $V$ such that $\det(A_X(G))=\det(A_{\sigma X}(\sigma G'))$.

\begin{proposition}\cite[Proposition 1]{grt}\label{prop:ncospectral}
	Let $G$ and $G'$ be two graphs with $n$ vertices.
	Then $G$ and $G'$ are isomorphic if and only if they are $n$-cospectral.
\end{proposition}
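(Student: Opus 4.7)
The plan is to show that the multivariate polynomial $\det(A_X(G))$ is a complete invariant of the labeled graph $G$, so that two labeled graphs on $V$ are isomorphic exactly when these polynomials agree up to a consistent relabeling of variables and vertices.

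First, I would expand $\det(A_X(G))$ by summing over permutations. Because $A(G)$ has zero diagonal, this collapses to
\[
\det(A_X(G)) \;=\; \sum_{F\subseteq V}\Bigl(\prod_{v\in F}x_v\Bigr)(-1)^{n-|F|}\det\!\bigl(A(G)[V\setminus F]\bigr).
\]
The point is to look at the coefficient of each squarefree monomial $\prod_{v\ne u,w}x_v$ (indexed by an unordered pair $\{u,w\}\subseteq V$): it equals $-\det\bigl(A(G)[\{u,w\}]\bigr)=-A(G)_{uw}^2$, which for a simple graph is $-1$ if $\{u,w\}\in E(G)$ and $0$ otherwise. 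Therefore the entire edge set of $G$, and hence the labeled graph itself, can be read off from $\det(A_X(G))$.

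For the forward direction, assume $\phi$ is an isomorphism from $G'$ to $G$, so $\phi G'=G$ in the sense of the excerpt. Take $\sigma=\phi$ and let $P_\phi$ be the permutation matrix of $\phi$. A direct computation (conjugating the diagonal of indeterminates and the adjacency matrix simultaneously) yields $A_{\sigma X}(\sigma G')=P_\sigma A_X(G')P_\sigma^{\top}$, and taking determinants together with $\sigma G'=G$ gives $\det(A_X(G))=\det(A_{\sigma X}(\sigma G'))$.

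For the converse, suppose $\det(A_X(G))=\det(A_{\sigma X}(\sigma G'))$ for some permutation $\sigma$. Apply the coefficient-extraction above to both sides; on the right-hand side, the simultaneous $\sigma$-relabeling of vertices of $G'$ and of the indeterminates means that the coefficient of $\prod_{v\ne u,w}x_v$ becomes $-A(G')_{\sigma^{-1}(u),\sigma^{-1}(w)}$. Comparing with the left-hand side forces $A(G)_{uw}=A(G')_{\sigma^{-1}(u),\sigma^{-1}(w)}$ for every pair $\{u,w\}$, so $\sigma^{-1}$ is an isomorphism from $G$ to $G'$. The only real obstacle is not conceptual but purely notational: keeping the conventions for $\sigma X$, $\sigma G'$, $P_\sigma$, and $P_\sigma^{\top}$ consistent so that the identity $A_{\sigma X}(\sigma G')=P_\sigma A_X(G')P_\sigma^{\top}$ genuinely holds. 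Once that bookkeeping is set, both implications reduce to reading coefficients off a polynomial.
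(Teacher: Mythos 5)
Your proposal is correct and follows essentially the same route as the paper, which proves the statement by observing that the coefficient of each squarefree degree-$(n-2)$ monomial $\prod_{k\neq i,j}x_k$ in $\det(A_X(G))$ is $-1$ precisely when $\{i,j\}$ is an edge, so the labeled graph is recoverable from the polynomial and the permutation $\sigma$ yields the isomorphism. (Note only a harmless sign slip: by your own expansion the coefficient is $+\det\bigl(A(G)[\{u,w\}]\bigr)=-A(G)_{uw}^2$, which still gives $-1$ on edges and $0$ otherwise, so the argument is unaffected.)
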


The proof of the above result is a consequence of a bijection between the edges of $G$ and the monomials of degree $n-2$ in $\det(A_X(G))$ given by $\{i,j\} \mapsto -\prod_{k\neq i,j}x_k$.
We note that this bijection is a generalization of the fact that the coefficient of the term $x^{n-2}$ in $\det(A_x(G))$ is the negative of the number of edges of $G$.
Since the determinant of the matrix is equal to the generator of the $n${\it -th} critical ideal, then we obtain the following corollary.

\begin{corollary}
	Let $G$ and $G'$ be two graphs with $n$ vertices.
	Then $G$ and $G'$ are isomorphic if and only if there exists a permutation $\sigma$ on $V(G')$ such that the $n${\it -th} critical ideals of $G$ and $\sigma G'$ are equal.
\end{corollary}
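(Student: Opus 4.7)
The plan is to deduce the corollary directly from Proposition \ref{prop:ncospectral} by exploiting the fact that the $n$-th critical ideal in $\mathbb{Z}[X]$ is principal. The only $n\times n$ submatrix of $A_X(G)$ is $A_X(G)$ itself, so $I_n^{\mathbb{Z}}(A_X(G)) = \langle \det(A_X(G))\rangle$. Since the units of $\mathbb{Z}[X]$ are $\pm 1$, equality of two such principal ideals is equivalent to equality of their generators up to a sign.

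For the forward direction, I would simply use that $G$ and $G'$ share the common vertex set $V$: if they are isomorphic then there is a permutation $\sigma$ on $V$ with $\sigma G' = G$, whence $A_X(G) = A_X(\sigma G')$ and the $n$-th critical ideals coincide trivially.

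For the converse, the hypothesis $I_n^{\mathbb{Z}}(A_X(G)) = I_n^{\mathbb{Z}}(A_X(\sigma G'))$ gives $\det(A_X(G)) = \pm \det(A_X(\sigma G'))$. My next step would be to pin down the sign: a Leibniz expansion shows that the monomial $\prod_{i=1}^n x_i$ is contributed only by the identity permutation and therefore appears with coefficient $+1$ in both determinants, forcing the sign to be $+$. Having equated the two determinants, I would invoke the bijection $\{i,j\}\mapsto -\prod_{k\neq i,j}x_k$ between edges and the monomials of degree $n-2$ of $\det(A_X(\cdot))$ (the same bijection that drives the proof of Proposition \ref{prop:ncospectral}) to recover $E(G) = E(\sigma G')$, i.e.\ $G = \sigma G'$, so that $G$ and $G'$ are isomorphic via $\sigma$.

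The argument is essentially a translation of Proposition \ref{prop:ncospectral} into the language of critical ideals once one recognizes the principality of $I_n^{\mathbb{Z}}(A_X(G))$. I do not foresee any real obstacle; the only mildly delicate step is resolving the $\pm$ ambiguity inherent in the identification of principal-ideal generators in $\mathbb{Z}[X]$, which is handled cleanly by comparing the leading monomials $\prod x_i$.
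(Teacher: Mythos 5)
Your proof is correct and follows essentially the same route as the paper, which likewise deduces the corollary from Proposition~\ref{prop:ncospectral} via the observation that the $n$-th critical ideal is principal with generator $\det(A_X(G))$. Your explicit resolution of the $\pm$ unit ambiguity by comparing the coefficient of $\prod_{i=1}^n x_i$ is a detail the paper leaves implicit, but it does not change the approach.
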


\subsection{Distance ideals}
The determinantal ideals $I_k^\mathcal{R}(D_X(G))$ were previously studied in \cite{at}. The next result extends Proposition~\ref{prop:ncospectral} in \cite{grt} to the matrix $D_X$.

\begin{proposition}\label{prop:distanceideals}
	Let $G$ and $G'$ be two graphs with $n$ vertices. Then $G$ and $G'$ are isomorphic if and only if there exists a permutation $\sigma$ on $V$ such that $\det(D_X(G))=\det(D_{\sigma X}(\sigma G'))$.
\end{proposition}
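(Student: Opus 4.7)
The approach mirrors that of Proposition~\ref{prop:ncospectral}: the forward direction is routine, and the nontrivial content is the backward direction. For the forward direction, an isomorphism $\sigma \colon G' \to G$ gives $\sigma G' = G$, so $D(\sigma G') = D(G)$, and a simultaneous relabeling of vertices and indeterminates makes the two polynomial determinants equal. The plan for the backward direction is to extract the entire distance matrix $D(G)$ directly from the polynomial $\det(D_X(G))$, and then use the elementary fact that $D(G)$ determines $G$ (since $\{i,j\}\in E(G) \iff d_G(i,j)=1$).

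The central step is a coefficient-extraction computation. Since each indeterminate $x_l$ occurs only at the diagonal entry $(l,l)$ of $D_X(G) = \diag(x_1,\ldots,x_n) - D(G)$, the polynomial $\det(D_X(G))$ is multilinear in $x_1,\ldots,x_n$. Expanding via Leibniz,
\[
\det(D_X(G)) = \sum_{\pi \in S_n} \sign(\pi)\prod_{l=1}^n (D_X(G))_{l,\pi(l)},
\]
I would observe that a term of total $x$-degree $n-2$ can only arise from a permutation $\pi$ with exactly $n-2$ fixed points, i.e.\ a transposition $(i\,j)$. The unique transposition producing the square-free monomial $\prod_{k \neq i,j} x_k$ contributes $(-1)\cdot(-d_G(i,j))\cdot(-d_G(j,i))\prod_{k\neq i,j} x_k = -d_G(i,j)^2 \prod_{k\neq i,j} x_k$. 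Hence the polynomial $\det(D_X(G))$ encodes every squared distance $d_G(i,j)^2$, and since distances are non-negative, it encodes the full matrix $D(G)$.

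To close the proof, I would apply the same coefficient extraction to $\det(D_{\sigma X}(\sigma G'))$: after re-expressing the diagonal indeterminates $x_{\sigma(l)}$ back in terms of $x_1,\ldots,x_n$, the coefficient of $\prod_{k\neq i,j} x_k$ becomes $-d_{\sigma G'}(\sigma^{-1}(i),\sigma^{-1}(j))^2$. Equality of polynomials forces these coefficients to coincide with $-d_G(i,j)^2$ for every pair $i<j$, which means that the permutation derived from $\sigma$ is a distance-preserving bijection between the vertex sets of $G$ and $G'$; since distance preservation implies adjacency preservation, $G \cong G'$. The main obstacle is bookkeeping: the permutation $\sigma$ acts simultaneously on the vertex labels of $G'$ (via $\sigma G'$) and on the indeterminates (via $\sigma X$), and one must track both carefully so that the extracted distances on the two sides can be compared coordinate by coordinate. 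Once this is pinned down, the rest of the argument is immediate from the multilinearity computation above.
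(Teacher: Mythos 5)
Your proposal is correct and follows essentially the same route as the paper: both extract the coefficient $-d_G(i,j)^2$ of the square-free degree-$(n-2)$ monomial $\prod_{k\neq i,j}x_k$ from the Leibniz expansion of $\det(D_X(G))$, so the polynomial encodes the full distance matrix and hence the graph. You merely spell out the steps the paper leaves implicit (the transposition argument and the passage from a distance-preserving bijection to an isomorphism), which is fine.
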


\begin{proof}
Let $\sigma$  be a permutation on $V(G)$.
Then $\sigma G$ is a graph on $V(G)$ such that $\{i,j\}\in E(G)$ if and only if $\{ \sigma(i),\sigma(j)\}\in E(\sigma G)$.
Since $$\det(M)=\sum_{\sigma\in S_n} \sign (\sigma)\prod_{i=1}^n M_{i,\sigma(i)},$$ there is a bijection between the edges of $G$ and the monomials of degree $n-2$ in $\det(D(G,X_G))$ given by $\{i,j\} \mapsto -d_G(i,j)^2\prod_{k\neq i,j}x_k$.
Then the desired result follows.
\end{proof}

Proposition \ref{prop:distanceideals} implies that there are not two graphs with the same distance ideals.

\begin{corollary}
	Let $G$ and $G'$ be two graphs with $n$ vertices.
	Then $G$ and $G'$ are isomorphic if and only if there exists a permutation $\sigma$ on $V$ such that $I_n^\mathcal{R}(D_X(G))=I_n^\mathcal{R}(D_{\sigma X}(\sigma G))$.
\end{corollary}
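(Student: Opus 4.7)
The plan is to reduce the corollary to Proposition~\ref{prop:distanceideals} by observing that the $n${\it-th} determinantal ideal of an $n\times n$ polynomial matrix is principal, generated by the determinant of the matrix itself, since that determinant is the unique $n$-minor. Thus $I_n^\mathcal{R}(D_X(G)) = \langle \det(D_X(G))\rangle$ and $I_n^\mathcal{R}(D_{\sigma X}(\sigma G')) = \langle \det(D_{\sigma X}(\sigma G'))\rangle$, and equality of the ideals amounts to the two generators being associate in $\mathcal{R}[X]$.

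For the forward implication, I would note that if $G\cong G'$, then by Proposition~\ref{prop:distanceideals} there is a permutation $\sigma$ with $\det(D_X(G)) = \det(D_{\sigma X}(\sigma G'))$, so the two principal ideals coincide.

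For the converse, suppose $I_n^\mathcal{R}(D_X(G)) = I_n^\mathcal{R}(D_{\sigma X}(\sigma G'))$ for some permutation $\sigma$. Because $\mathcal{R}\in\{\mathbb{R},\mathbb{Q},\mathbb{Z}\}$ is an integral domain, the units of $\mathcal{R}[X]$ coincide with the units of $\mathcal{R}$, so the two generators differ by a unit $u\in\mathcal{R}^\times$. To pin down $u=1$, I would isolate the monomial $x_1x_2\cdots x_n$ in both determinants: expanding $\det$ by the Leibniz formula and using that the diagonal entries of $D(G)$ (and of $D(\sigma G')$) are zero, only the identity permutation contributes to this monomial, yielding coefficient $1$ in each. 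Comparing coefficients of $x_1\cdots x_n$ forces $u=1$, hence $\det(D_X(G)) = \det(D_{\sigma X}(\sigma G'))$, and Proposition~\ref{prop:distanceideals} produces the isomorphism.

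The only delicate step is the unit-elimination argument in the converse. If one wanted to state the corollary over a more general commutative ring $\mathcal{R}$, the characterization of units of $\mathcal{R}[X]$ could be thornier; but within the integral-domain setting implicit in the paper, the leading-monomial comparison is clean and is the single key observation that makes the reduction to Proposition~\ref{prop:distanceideals} work.
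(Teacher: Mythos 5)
Your proof is correct and follows essentially the same route as the paper: the $n$-th determinantal ideal is principal, generated by $\det(D_X(G))$, so the corollary reduces to Proposition~\ref{prop:distanceideals}. Your leading-monomial argument (comparing the coefficient of $x_1\cdots x_n$, which is $1$ since $D$ has zero diagonal) to rule out a nontrivial unit is a welcome extra degree of care that the paper leaves implicit.
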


\subsection{Univariate determinantal ideals}\label{sec:charideals}


Our main result of this section is a relation between the SNF of an integer matrix $M$ and its eigenvalues. This can be seen as an extension of a result by Rushanan (see Theorem 1 in \cite{R}), who  studies  the  relationship  between  the  spectrum  and  the SNF  of  non-singular  integer  matrices  with  integer  eigenvalues. Note that his result is  valid  for any PID.

\begin{theorem}\label{thm:spectrimSNF}
Let $M$ be a $n\times n$ symmetric integer matrix and let $M_x=xI_n-M$.
Suppose $\lambda$ is a common root of the polynomials in $I_k^\mathbb{Z}(M_x)$, then $\lambda$ is an eigenvalue of $M$,
$\lambda$ is a real number,
and $\lambda$ is a factor of $\Delta_k(M)$ with division defined in
the ring of algebraic integers.
\end{theorem}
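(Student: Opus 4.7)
The plan is to derive all three conclusions from a single polynomial long-division argument applied to every $k$-minor of $M_x$.

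I would begin by observing that the characteristic polynomial $p(x):=\det(xI_n-M)$ is the (unique) $n$-minor of $M_x$, hence it generates $I_n^\mathbb{Z}(M_x)$. By Proposition~\ref{prop:chaininclusionideals}, $p\in I_n^\mathbb{Z}(M_x)\subseteq I_k^\mathbb{Z}(M_x)$, so the hypothesis forces $p(\lambda)=0$. This simultaneously yields: (i) $\lambda$ is an eigenvalue of $M$, (ii) $\lambda\in\mathbb{R}$ because $M$ is symmetric, and (iii) $\lambda$ is an algebraic integer, since $p$ is monic over $\mathbb{Z}$.

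To prove the divisibility, I would aim at the stronger statement that $\lambda$ divides every $k$-minor $\det(M[\mathcal{I};\mathcal{J}])$ of $M$ in $\overline{\mathbb{Z}}$. For any $k$-index sets $\mathcal{I},\mathcal{J}\subseteq[n]$, the polynomial $f_{\mathcal{I},\mathcal{J}}(x):=\det(M_x[\mathcal{I};\mathcal{J}])$ is a $k$-minor of $M_x$ and hence lies in $I_k^\mathbb{Z}(M_x)$, so the hypothesis gives $f_{\mathcal{I},\mathcal{J}}(\lambda)=0$. Since $(x-\lambda)$ is monic as an element of $\mathbb{Z}[\lambda][x]$, ordinary polynomial division in that ring produces $f_{\mathcal{I},\mathcal{J}}(x)=(x-\lambda)\,g_{\mathcal{I},\mathcal{J}}(x)$ with $g_{\mathcal{I},\mathcal{J}}\in\mathbb{Z}[\lambda][x]\subseteq\overline{\mathbb{Z}}[x]$. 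Evaluating at $x=0$ and using $f_{\mathcal{I},\mathcal{J}}(0)=(-1)^k\det(M[\mathcal{I};\mathcal{J}])$, we conclude $\lambda\mid\det(M[\mathcal{I};\mathcal{J}])$ in $\overline{\mathbb{Z}}$.

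Finally, I would invoke B\'ezout's identity in $\mathbb{Z}$: since $\Delta_k(M)$ is the greatest common divisor of the $k$-minors of $M$, it is an integer linear combination $\Delta_k(M)=\sum_{\mathcal{I},\mathcal{J}}c_{\mathcal{I},\mathcal{J}}\det(M[\mathcal{I};\mathcal{J}])$ with $c_{\mathcal{I},\mathcal{J}}\in\mathbb{Z}$. Each summand is divisible by $\lambda$ in $\overline{\mathbb{Z}}$ by the previous step, so $\lambda\mid\Delta_k(M)$ in the ring of algebraic integers, as required.

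The main subtlety lies in the long-division step: although $f_{\mathcal{I},\mathcal{J}}$ need not be monic (indeed, for $\mathcal{I}\neq\mathcal{J}$ its degree in $x$ drops below $k$, since the ``diagonal pattern'' of $x$'s only appears on the indices of $\mathcal{I}\cap\mathcal{J}$), what matters is that the \emph{divisor} $(x-\lambda)$ is monic, which is exactly what forces the quotient $g_{\mathcal{I},\mathcal{J}}$ to lie in $\mathbb{Z}[\lambda][x]$ and hence to evaluate to an algebraic integer at $x=0$. One should also note that the symmetry hypothesis is used only to conclude $\lambda\in\mathbb{R}$; the divisibility half of the theorem is valid for any integer matrix $M$.
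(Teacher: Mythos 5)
Your proof is correct and follows essentially the same route as the paper's: the chain of ideals/varieties forces $\lambda$ to be a root of the characteristic polynomial (hence a real eigenvalue and an algebraic integer), and then $(x-\lambda)$ divides each $k$-minor of $M_x$, so evaluating at $x=0$ gives $\lambda\mid\Delta_k(M)$. Your explicit monic-division step (quotient in $\mathbb{Z}[\lambda][x]\subseteq\overline{\mathbb{Z}}[x]$) together with the B\'ezout combination of the $k$-minors makes precise the divisibility in the ring of algebraic integers, a point the paper's proof leaves implicit when it passes through Corollary~\ref{theo:determinatalevaluatedsingleind}.
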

\begin{proof}
The polynomial ring $\mathbb{Z}[x]$ is Notherian, therefore we can assume the determinantal ideal $I_k^\mathbb{Z}(M_x)$ is finitely generated by the non-constant polynomials\\ $q_1(x),\dots,q_l(x)$, and $\lambda$ is a common root of these polynomials. By the contention of the varieties shown in Proposition~\ref{prop:chaininclusionideals} and the fact that $M_x$ is a symmetric matrix, it follows that $\lambda$ is a real root of the characteristic polynomial $\det(M_x)$. Therefore $\lambda$ is an eigenvalue of $M$. 
That is, the polynomial $x-\lambda$ is a factor of each polynomial in the determinantal ideal $I_k^\mathbb{Z}(M_x)$.
On the other hand, when $I_k^\mathbb{Z}(M_x)$ is evaluated at $x=0$, then by Corollary~\ref{theo:determinatalevaluatedsingleind} it follows that $I_k^\mathbb{Z}(M)$ is generated by $\Delta_k(M)$.
Therefore, $\lambda$ is a factor of $\Delta_k(M)$, completing the proof.
\end{proof}

As a corollary of Theorem \ref{thm:spectrimSNF}, we obtain Rushanan's result \cite{R}.

\begin{corollary}\cite{R}
Let $\lambda$ be an eigenvalue of an integer nonsingular symmetric matrix $M$, and let $d_n$ be the last invariant factor of the SNF of $M$. Then $\lambda|s_n$ (division defined in the ring of algebraic integers).
\end{corollary}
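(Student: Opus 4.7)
The plan is to realize $d_n$ as the exponent of the Smith group of $M$ and to build from it an explicit integer matrix whose eigenvalues force the desired divisibility. Since $d_n$ is the largest invariant factor, the cokernel $\mathbb{Z}^n / M\mathbb{Z}^n \cong \bigoplus_i \mathbb{Z}/d_i\mathbb{Z}$ is annihilated by $d_n$, so $d_n\mathbb{Z}^n \subseteq M\mathbb{Z}^n$. I would therefore pick, for each standard basis vector $e_j$, an integer preimage $v_j \in \mathbb{Z}^n$ with $M v_j = d_n e_j$, and assemble the $v_j$ as the columns of a matrix $N \in \mathbb{Z}^{n\times n}$, obtaining the key identity $M N = d_n I_n$; equivalently, $N = d_n M^{-1}$ has integer entries.

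Next I would transfer the symmetry of $M$ to $N$: because $M^{\top} = M$ and $M$ is nonsingular, $N^{\top} = d_n (M^{\top})^{-1} = d_n M^{-1} = N$, so $N$ is itself a symmetric integer matrix. In particular $N$ is diagonalizable over $\mathbb{R}$, and its eigenvalues are algebraic integers, being roots of the monic characteristic polynomial $\chi_N(x) \in \mathbb{Z}[x]$. Since $M$ and $N$ commute, they admit a common eigenbasis; reading off $MN = d_n I_n$ in that basis shows that the corresponding eigenvalue pairs multiply to $d_n$, so the spectrum of $N$ is exactly $\{\,d_n/\lambda : \lambda \in \spec(M)\,\}$, with matching multiplicities.

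Concluding: for any eigenvalue $\lambda$ of $M$, the quotient $d_n/\lambda$ appears as an eigenvalue of the integer matrix $N$, and is therefore an algebraic integer; this is exactly the statement $\lambda \mid d_n$ in the ring of algebraic integers, which is Rushanan's conclusion. Note that a direct invocation of Theorem~\ref{thm:spectrimSNF} at $k = n$ would deliver only the weaker divisibility $\lambda \mid \Delta_n(M) = \det M = d_1 d_2 \cdots d_n$, so the sharper claim $\lambda \mid d_n$ genuinely requires the cokernel-exponent construction above. The most delicate point of the plan is its first step, namely verifying that $N = d_n M^{-1}$ has integer entries; this is precisely the content of $d_n$ being the exponent of the Smith group $\mathbb{Z}^n/M\mathbb{Z}^n$, which is a direct consequence of the invariant-factor decomposition of the cokernel together with $d_i \mid d_n$ for all $i$.
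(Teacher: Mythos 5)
Your proof is correct, but it follows a genuinely different route from the one in the paper. The paper derives the corollary from Theorem~\ref{thm:spectrimSNF} by contradiction: assuming $\lambda\nmid d_n$, it argues that the factor $(x-\lambda)^m$ propagates down the divisibility chain $\Delta_{n}, \Delta_{n-1},\dots,\Delta_1$ of gcds of minors of $xI-M$, eventually forcing $M$ to be the scalar matrix $\diag(\lambda,\dots,\lambda)$, a contradiction. You instead use the cokernel-exponent construction: since $d_n$ annihilates $\mathbb{Z}^n/M\mathbb{Z}^n$, the matrix $N=d_nM^{-1}$ is integral, so the eigenvalues $d_n/\lambda$ of $N$ are roots of a monic integer polynomial and hence algebraic integers, which is exactly $\lambda\mid d_n$. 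This is in essence Rushanan's original argument, and it is cleaner and more self-contained than the paper's: it needs neither Theorem~\ref{thm:spectrimSNF} nor the symmetry of $M$ (the common-eigenbasis step can be skipped entirely, since $Mv=\lambda v$ gives $Nv=(d_n/\lambda)v$ directly, with no diagonalizability required), whereas the paper's version ties the corollary into its determinantal-ideal framework. Your closing remark is also well taken: invoking Theorem~\ref{thm:spectrimSNF} at $k=n$ only yields $\lambda\mid\Delta_n(M)=\pm d_1\cdots d_n$, so the sharper conclusion $\lambda\mid d_n$ really does require an extra idea, which is precisely what the paper's descending-chain argument (and your construction of $N$) supplies.
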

\begin{proof}
Let $d_1,\dots,d_n$ be the invariant factors of the SNF of $M$. 
Let $m$ be the multiplicity of $\lambda$.
Then $(x-\lambda)^m$ divides $\det(xI-M)$ that equals $\Delta_n(xI-M)$.
Suppose $\lambda$ does not divide $d_n=\Delta_n(M)/\Delta_{n-1}(M)$.
This implies that $(x-\lambda)^m$ also divides $\Delta_{n-1}(M)$.
But since $d_{n-1}$ divides $d_n$, then $(x-\lambda)^m$ is a factor of $\Delta_{n-2}(xI-M)$.
Analogously, for any $k$, $(x-\lambda)^m$ is a factor of $\Delta_k(xI-M)$.
In particular the gcd of all entries of $xI-M$ is equal to $(x-\lambda)^m$.
But this only is possible when all non-diagonal entries of $xI-M$ are zero and $m=1$.
From which follows $M=\diag(\lambda,\dots,\lambda)$ is already in its SNF.
Which is a contradiction.
\end{proof}

Theorem \ref{thm:spectrimSNF} shows that the varieties of the univariate determinantal ideal of an undirected graph are contained in the reals, therefore, from here on, we will denote the variety of $I_k^\mathcal{R}(M_x)$ by $V^\mathbb{R}$ or $V$ when we consider $\mathcal{R}=\mathbb{R}$ or $\mathbb{Z}$ and $M_x=xI_n-M$ with $M$ being a symmetric integer matrix.


\begin{example}
Consider the bipartite graph $K_{3,3}$. Then
\[
L_x(K_{3,3})=xI_6-L(K_{3,3})=
\begin{bmatrix}
x - 3 &     0 &     0 &     1 &     1 &     1\\
    0 & x - 3 &     0 &     1 &     1 &     1\\
    0 &     0 & x - 3 &     1 &     1 &     1\\
    1 &     1 &     1 & x - 3 &     0 &     0\\
    1 &     1 &     1 &     0 & x - 3 &     0\\
    1 &     1 &     1 &     0 &     0 & x - 3\\
\end{bmatrix}.
\]
The Gr\"obner bases of the $k${\it-th} Laplacian characteristic ideal $I_k^\mathbb{Z}(L_x(K_{3,3}))$ are: 
\[
\begin{cases}
\langle1\rangle & \text{when }k=1,2,\\
\langle x-3\rangle & \text{when }k=3,\\
\langle (x-3)^2\rangle & \text{when }k=4,\\
\langle (x-3)^3(x+9), 3(x - 3)^3\rangle & \text{when }k=5,\\
\langle x (x - 3)^4 (x - 6)\rangle & \text{when }k=6.\\
\end{cases}
\]
By evaluating each $I_k^\mathbb{Z}(L_x(K_{3,3}))$ at $x=0$, we recover the SNF of the Laplacian matrix of $K_{3,3}$: $\diag(1,1,3,3,9,0)$, and from the last ideal we obtain that the eigenvalues of the Laplacian matrix are $\{6, 0, 3, 3, 3, 3\}$.
\end{example}

The following result states that the varieties of the univariate determinantal ideals are the same whether they belong to $\mathbb{Z}[x]$ or $\mathbb{R}[x]$, despite that the ideals could be different.

\begin{proposition}\label{prop:equalvarietiesunivariate}
For any $n\times n$ symmetric integer matrix $M$, let $M_x=xI_n-M$.
Then $V^\mathbb{R}(I_k^\mathbb{Z}(M_x))= V^\mathbb{R}(I_k^\mathbb{R}(M_x))$. 
\end{proposition}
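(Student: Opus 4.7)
The plan is to observe that both ideals share the same generating set, namely $\minors_k(M_x)$, the finite collection of $k$-minors of $M_x$. The only difference between $I_k^\mathbb{Z}(M_x)$ and $I_k^\mathbb{R}(M_x)$ is the coefficient ring used to form $\mathbb{Z}[x]$- versus $\mathbb{R}[x]$-linear combinations of these minors. Once this is made explicit, the equality of varieties follows from the standard fact that the variety of an ideal equals the common zero set of any generating set.

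First I would recall the defining equality $I_k^{\mathcal R}(M_x)=\langle\minors_k(M_x)\rangle_{\mathcal R[x]}$ and note the immediate inclusion $I_k^\mathbb{Z}(M_x)\subseteq I_k^\mathbb{R}(M_x)$ as subsets of $\mathbb{R}[x]$ (every $\mathbb{Z}[x]$-combination of minors is an $\mathbb{R}[x]$-combination). Contention of ideals reverses containment of varieties, giving the easy direction
\[
V^\mathbb{R}(I_k^\mathbb{R}(M_x))\subseteq V^\mathbb{R}(I_k^\mathbb{Z}(M_x)).
\]

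For the reverse inclusion, suppose $\lambda\in V^\mathbb{R}(I_k^\mathbb{Z}(M_x))$. Since every $k$-minor of $M_x$ lies in $I_k^\mathbb{Z}(M_x)$, we have $q(\lambda)=0$ for each $q\in\minors_k(M_x)$. Now take any $f\in I_k^\mathbb{R}(M_x)$; by definition $f=\sum_{i} h_i(x)\,q_i(x)$ with $h_i\in\mathbb{R}[x]$ and $q_i\in\minors_k(M_x)$. Evaluating at $\lambda$ gives $f(\lambda)=\sum_i h_i(\lambda)\,q_i(\lambda)=0$, so $\lambda\in V^\mathbb{R}(I_k^\mathbb{R}(M_x))$. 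Combining the two inclusions yields the claimed equality.

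There is really no hard step here; the ``obstacle'' is only to resist being distracted by the genuine structural difference between the two ideals (for instance $I_k^\mathbb{R}$ is principal while $I_k^\mathbb{Z}$ need not be) and to notice that this difference is invisible to the variety functor, because the variety depends only on a generating set and the two ideals are generated by exactly the same set of polynomials.
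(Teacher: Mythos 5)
Your proof is correct and is essentially the paper's argument: both rest on the observation that $I_k^{\mathbb{Z}}(M_x)$ and $I_k^{\mathbb{R}}(M_x)$ are generated by the same polynomials, and that a common root of the generators annihilates every $\mathbb{R}[x]$-combination of them. The only cosmetic difference is that the paper phrases the second direction through the principal generator $q(x)$ of $I_k^{\mathbb{R}}(M_x)$ (using that $\mathbb{R}[x]$ is a PID and writing $q=\sum p_i q_i$), whereas you evaluate an arbitrary element of $I_k^{\mathbb{R}}(M_x)$ directly, which is if anything slightly more elementary.
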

\begin{proof}
Suppose that the determinantal ideal $I_k^\mathbb{Z}(M_x)$ is generated by the non-constant polynomials $q_1(x),\dots,q_l(x)$.
Since $\mathbb{R}[x]$ is a PID, then $\left\langle q_1(x),\dots,q_l(x)\right\rangle\subseteq\mathbb{R}[x]$ is generated by a unique polynomial, say $q(x)$, that is the generator of $I_k^\mathbb{R}(M_x)$.
Therefore, there exist polynomials $p_1(x),\dots,p_l(x)\in\mathbb{R}[x]$ such that $p_1(x)q_1(x)+\dots+p_l(x)q_l(x)=q(x)$.
Then, if $\lambda$ is a common root of $q_1(x),\dots,q_l(x)$, then $\lambda$ is a root of $q(x)$.
On the other hand, if $\lambda$ is a root of $q(x)$, then $\lambda$ is a common root of all polynomials in $I_k^\mathbb{R}(M_x)$, in particular, $\lambda$ is root of the polynomials $q_1(x),\dots,q_l(x)$, which are the generators of $I_k^\mathbb{Z}(M_x)$.
\end{proof}

The varieties of the univariate determinantal ideals of the matrices $A_x$ and $L_x$ can be also used to bound the minimum rank and the zero-forcing number of a graph; this particular application of the critical ideals appeared in \cite{alflin}.
However, critical ideals are finer invariants than the determinantal ideals of the matrices $A_x$ and $L_x$.
On the other hand, the evaluation of the determinantal ideals with entries in $\mathbb{Z}[x]$ of $A_x$ at $x=0$ gives us the structure of the Smith group, and if the graph is $r$-regular the evaluation of the ideals at $x=r$ gives us the structure of the sandpile group.
The following two results provide an illustration of the above and they follow by applying Proposition 3.15 and Theorem 3.16 in \cite{corrval} to the complete graph.




\begin{proposition}
    The characteristic ideals of the complete graph $K_n$ with $n$ vertices are given by
    \[
    I^\mathbb{Z}_k(A_x(K_n))=
    \begin{cases}
        \langle  (x+1)^{k-1} \rangle & \text{if } k \leq n-1,\\
        \langle (x + 1-n)(x+1)^{n-1} \rangle & \text{if } k=n,\\
    \end{cases}
    \]
    and
    \[
    V\left(I^\mathbb{Z}_k(A_x(K_n))\right)=
    \begin{cases}
        \emptyset & \text{if } k=1,\\
        \{-1\} & \text{if } 2\leq k \leq n-1,\\
        \{n-1,-1\} & \text{if } k=n.\\
    \end{cases}
    \]
\end{proposition}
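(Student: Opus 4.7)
The plan is to use Corollary~\ref{coro:elementaryoperationscodeterminantal} to reduce $A_x(K_n)$ to a matrix whose $k$-minors can be enumerated almost by inspection. Writing $A_x(K_n)=(x+1)I_n-J_n$, I would subtract column $1$ from every other column and then add rows $2,\dots,n$ to row $1$ (both are elementary operations over $\mathbb{Z}[x]$). This produces the codeterminantal matrix
\[
M' = \begin{pmatrix} x+1-n & 0 & 0 & \cdots & 0\\ -1 & x+1 & 0 & \cdots & 0\\ -1 & 0 & x+1 & \cdots & 0\\ \vdots & & & \ddots & \\ -1 & 0 & 0 & \cdots & x+1 \end{pmatrix},
\]
in which every column of index $\geq 2$ has a single non-zero entry $x+1$ on the diagonal, while column $1$ carries $x+1-n$ at the top and $-1$'s below.

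Next I would enumerate the $k$-minors of $M'$ by three cases. First, if column $1$ is not selected, then the chosen submatrix has at most one non-zero entry per column, so the minor vanishes unless the row set equals the column set, in which case it equals $(x+1)^k$. Second, if both row $1$ and column $1$ are selected, Laplace expansion along row $1$ leaves only the $(x+1-n)$-entry, and the remaining $(k-1)\times(k-1)$ block falls under the first type, giving $(x+1-n)(x+1)^{k-1}$ or $0$. Third, if column $1$ is selected but row $1$ is not, expansion along column $1$ gives an alternating sum of $-1$ times $(k-1)\times(k-1)$ submatrices of the first sparse shape, each of which is $\pm(x+1)^{k-1}$ or $0$; the only way the sum can be non-zero is when the chosen rows contain exactly the chosen non-first columns plus one extra row index, in which case the minor is $\pm(x+1)^{k-1}$.

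For $k\leq n-1$ the third case is realizable (there is always a free row index in $\{2,\dots,n\}$ outside the $(k-1)$ chosen non-first columns), so $(x+1)^{k-1}\in I_k^{\mathbb{Z}}(A_x(K_n))$; combined with the fact that all three cases yield minors divisible by $(x+1)^{k-1}$, this pins down $I_k^{\mathbb{Z}}(A_x(K_n))=\langle (x+1)^{k-1}\rangle$, which is trivial for $k=1$ and generated by $(x+1)^{k-1}$ for $2\leq k\leq n-1$. For $k=n$ only the second case survives, giving the generator $(x+1-n)(x+1)^{n-1}$. The variety formulas then follow immediately by solving for the common zeros of the generators: $I_1=\langle 1\rangle$ has empty variety, $(x+1)^{k-1}$ has root $-1$, and $(x+1-n)(x+1)^{n-1}$ picks up the additional root $n-1$. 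The main obstacle is the bookkeeping in the third case: one must verify that every non-vanishing such minor really does reduce to $\pm(x+1)^{k-1}$ and that this value is indeed achieved for $k\leq n-1$; the other cases and the final variety computations are routine.
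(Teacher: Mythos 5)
Your argument is correct, and it is genuinely different from the way the paper handles this proposition. The paper gives no direct computation: it obtains the statement by citing Proposition 3.15 and Theorem 3.16 of \cite{corrval}, i.e.\ by taking the known multivariate critical ideals of $K_n$ and specializing all the diagonal indeterminates to the single variable $x$. You instead give a self-contained computation: writing $A_x(K_n)=(x+1)I_n-J_n$, reducing by elementary row and column operations (legitimate here by Corollary~\ref{coro:elementaryoperationscodeterminantal}) to the sparse matrix $M'$ with $x+1-n$ in the corner, $-1$'s below it, and $x+1$ on the rest of the diagonal, and then enumerating the $k$-minors by the three cases you describe. Your case analysis checks out: every nonzero $k$-minor of $M'$ is a $\mathbb{Z}[x]$-multiple of $(x+1)^{k-1}$; in your third case at most one term of the cofactor expansion survives (since removing distinct rows cannot give the same row set), so such a minor is $\pm(x+1)^{k-1}$ and is realizable exactly when $k\leq n-1$; and for $k=n$ only the full determinant $(x+1-n)(x+1)^{n-1}$ remains. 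This pins down the ideals and the varieties as claimed. The trade-off is clear: the paper's route is shorter and also yields the finer multivariate information about the critical ideals of $K_n$ for free, while your route is elementary, stays entirely within the machinery already developed in Section~\ref{sec:determinantalIdeals}, and does not require the external classification results of \cite{corrval}.
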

From the above result, we deduce the following corollary.

\begin{corollary}
Let $K_n$ be the complete graph with $n$ vertices. Then
\begin{description}
    \item[$(i)$] the minimum rank $\mr(K_n)$ of $K_n$ is at most $1$,
    \item[$(ii)$] the sandpile group $K(K_n)$ is isomorphic to $\mathbb{Z}_{n}^{n-2}$,
    \item[$(iii)$] the Smith group $(K_n)$ is isomorphic to $\mathbb{Z}_{n-1}$.
\end{description}
\end{corollary}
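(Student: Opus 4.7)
The plan is to derive the three statements as direct consequences of the characteristic ideal formula in the preceding proposition, by evaluating those univariate ideals at suitable integer points and reading off the Smith normal form via the Elementary divisors theorem (Theorem~\ref{theo:edt}). For parts (ii) and (iii) this is essentially a one-line computation each once we invoke Corollary~\ref{coro:evalunideterminantal}, which says that evaluating $I_k^{\mathbb{Z}}(A_x(K_n))$ at a scalar $c$ produces the principal ideal $\langle\Delta_k(cI_n - A(K_n))\rangle$. Part (i) is independent of these evaluations and is handled by exhibiting a rank-one symmetric matrix with the correct zero/nonzero pattern.

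For (iii), I would substitute $c=0$ in the previous proposition. For $k\leq n-1$ the generator $(x+1)^{k-1}$ evaluates to $1$, and for $k=n$ the generator $(x+1-n)(x+1)^{n-1}$ evaluates to $1-n$. Thus $\Delta_k(A(K_n))=1$ for $k\leq n-1$ and $\Delta_n(A(K_n))=n-1$ (ideals are defined up to units). By Theorem~\ref{theo:edt} the invariant factors of $A(K_n)$ are $f_1=\dots=f_{n-1}=1$ and $f_n=n-1$, hence $S(K_n)=\operatorname{coker}(A(K_n))\cong \mathbb{Z}_{n-1}$. For (ii), I would use that $K_n$ is $(n-1)$-regular, so $L(K_n)=(n-1)I_n-A(K_n)$, i.e., $L(K_n)$ equals $A_x(K_n)$ evaluated at $x=n-1$. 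Substituting $c=n-1$ in the characteristic ideals gives $\Delta_k(L(K_n))=n^{k-1}$ for $k\leq n-1$ and $\Delta_n(L(K_n))=0$, whence the invariant factors of $L(K_n)$ are $1,n,n,\ldots,n,0$ with $n$ appearing $n-2$ times; taking the torsion part of the cokernel yields $K(K_n)\cong \mathbb{Z}_n^{n-2}$.

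For (i), I would bypass the determinantal ideals and give the direct witness: the all-ones matrix $J_n=I_n+A(K_n)$ is symmetric, has every off-diagonal entry equal to $1$ (so it respects the zero/nonzero pattern required for $K_n$), and has rank one. Therefore $\mr(K_n)\leq 1$. Alternatively, one can note that $\gamma_{\mathbb{R}}(A_x(K_n))=1$ and that the variety of $I_2^{\mathbb{R}}(A_x(K_n))$ is $\{-1\}\subset\mathbb{R}$, so the minimum-rank conjecture discussed earlier in the section (which is proved for graphs with $\mr\leq 3$) yields the same bound.

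The main obstacle is essentially bookkeeping rather than substance: one must be careful that $\Delta_n(A(K_n))$ is well-defined only up to sign (so $\langle 1-n\rangle=\langle n-1\rangle$), and one must remember that the sandpile group is by definition the torsion summand of $\operatorname{coker}(L(K_n))$, so the trailing $0$ invariant factor contributes a free summand that is discarded. Once these conventions are respected, the proof reduces to mechanical arithmetic with the generators given in the previous proposition.
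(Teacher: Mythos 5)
Your proposal is correct, and for parts (ii) and (iii) it coincides with the paper's argument: evaluate the characteristic ideals of $K_n$ at $x=n-1$ (using $(n-1)$-regularity, so $L(K_n)=(n-1)I_n-A(K_n)$) and at $x=0$, obtain $\Delta_k(L(K_n))=n^{k-1}$, $\Delta_n(L(K_n))=0$ and $\Delta_k(A(K_n))=1$, $\Delta_n(A(K_n))=n-1$, and read off the Smith normal forms $\diag(1,n,\dots,n,0)$ and $\diag(1,\dots,1,n-1)$; your remarks about signs/units and about discarding the free summand for the sandpile group are exactly the right bookkeeping. Where you genuinely diverge is part (i): you exhibit the rank-one symmetric witness $J_n=I_n+A(K_n)$, which fits the off-diagonal pattern of $K_n$ and immediately gives $\mr(K_n)\le 1$, whereas the paper stays inside the determinantal-ideal framework: it notes that $V\left(I_2^{\mathbb{Z}}(A_x(K_n))\right)=\{-1\}$ is nonempty and invokes the theorem of \cite{alflin} stating that if $I_k^{\mathbb{R}}(A_X(G))$ evaluated at some ${\bf a}\in\mathbb{R}^n$ equals $\langle 0\rangle$, then $\mr(G)\le k-1$ (here $k=2$, ${\bf a}=(-1,\dots,-1)$). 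Your witness is more elementary and self-contained; the paper's route buys uniformity, showing all three claims flow from evaluations of the characteristic/critical ideals. One caution on your fallback for (i): you appeal to ``the minimum-rank conjecture \dots proved for graphs with $\mr\le 3$,'' but the paper does not use the conjecture $\mr_{\mathbb{R}}(G)\le\gamma_{\mathbb{R}}(A_X(G))$ (known only for at most $7$ vertices or $\mr\le 3$); it uses the unconditional evaluation result just described, and relying on the conjecture alone would not cover all $n$. Since your primary $J_n$ argument stands on its own, this does not create a gap.
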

\begin{proof}
$(i)$ Since $V\left(I^\mathbb{Z}_2(A_x(K_n))\right)$ is not empty, we can apply the result in \cite{alflin} which shows that if there exists ${\bf a}\in\mathbb{R}^n$ such that $I^{\mathbb{R}}_{k}(A_X(G))\mid_{X={\bf a}}=\langle 0\rangle$ for some $k$, then $\mr(G)\leq k-1$, and we obtain $\mr(K_n)\leq1$.

$(ii)$ By evaluating the characteristic ideals at $t=n-1$, we obtain $\Delta_k(L(K_n))=n^{k-1}$ for $1\leq k\leq n-1$, and $\Delta_n(L(K_n))=0$, from which follows that the SNF of the Laplacian matrix of $K_n$ is $\diag(1,n,\dots,n,0)$.

$(iii)$ By evaluating the characteristic ideals at $t=0$, we obtain $\Delta_k(A(K_n))=1$ for $1\leq k\leq n-1$, and $\Delta_n(A(K_n))=n-1$, from which follows that the SNF of the adjacency matrix of $K_n$ is $\diag(1,\dots,1,n-1)$.
\end{proof}


\subsection{Overview of Section \ref{sec:determinantalIdeals}}

Figure~\ref{fig:mindmap} provides an overview of the determinantal ideals which have been investigated in Section \ref{sec:determinantalIdeals}.

\begin{figure}[ht]
    \centering
    \begin{tikzpicture}[fill=gray,scale=0.8]
\draw (1.7,0) node {Thm.~\ref{thm:spectrimSNF}};
\draw[double] (-2,-1.5) rectangle (2.8,0.5);
\draw (-0.9,0) node {
SNF(M)
};
\draw[dashed,thick] (0.7,-1.9) rectangle (5.5,1.9);
\draw (2.2,1.3) node {
spectrum(M)
};
\draw[dotted,thick] (0.3,-2.5) rectangle (7,3.5);
\draw (2,2.8) node {$\left\{I_k^\mathbb{R}(M_X)\right\}_{k=1}^n$};
\draw[thick] (-5,-2.2) rectangle (5.9,2.2);
\draw (-3.5,1.5) node {$\left\{I_k^\mathbb{Z}(M_x)\right\}_{k=1}^n$};
\draw (-5.5,-2.8) rectangle (8,4);
\draw (-3.8,3.3) node {$\left\{I_k^\mathbb{Z}(M_X)\right\}_{k=1}^n$};
\end{tikzpicture}
    \caption{Relations between different determinantal ideals of an integer matrix $M$.}
    \label{fig:mindmap}
\end{figure}
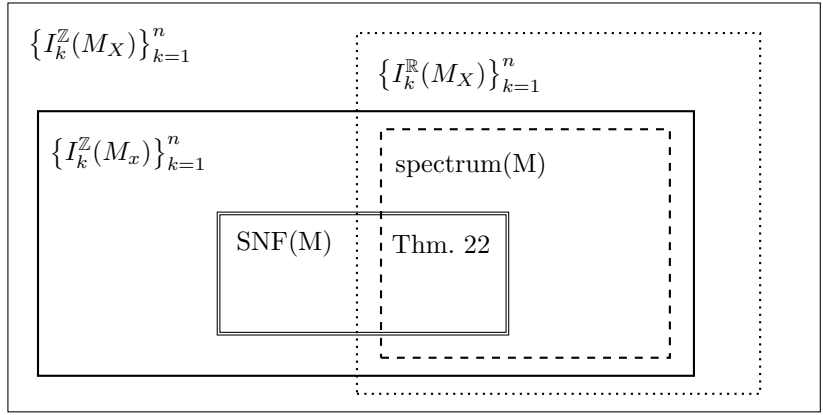

Keeping using the notation above, for any $n\times n$ integer matrix $M$, we denote $M_x=xI_n-M$ and $M_X=\diag(x_1,\dots,x_n)-M$.
From the similarity theory of matrices and the fact that each determinantal ideal $I_k^\mathbb{R}(M_x)$ is principal, we know that determinantal ideals $\left\{I_k^\mathbb{R}(M_x)\right\}_{k=1}^n$, $\text{spectrum}(M)$ and $\text{SNF}(M_x)$ are equivalent.
These concept are enclosed in a unique dashed region.
The SNF of $M$ and determinantal ideals $\left\{I_k^\mathbb{Z}(M_x)\right\}_{k=1}^n$ are equivalent, and are enclosed in the double lined region.
The intersection of the SNF of $M$ and its spectrum is given by Theorem \ref{thm:spectrimSNF}. Both the SNF and the spectrum are contained in the determinantal ideals $\left\{I_k^\mathbb{Z}(M_x)\right\}_{k=1}^n$ that is enclosed in the bold line region. 
On the other hand, the spectrum is generalized by the varieties of the determinantal ideals $\left\{I_k^\mathbb{R}(M_X)\right\}_{k=1}^n$, which are enclosed in the dotted region.
Finally, observe that all concepts lie in the determinantal ideals $\left\{I_k^\mathbb{Z}(M_X)\right\}_{k=1}^n$ which are enclosed in the biggest rectangle.

Given a graph $G=(V,E)$ with $n$ vertices, consider the $n\times n$ matrix $X(G)$ with rows and columns indexed by the vertices of $G$, in which the $(u,v)$-entry is the indeterminate $x_{u,v}$ if $uv\in E(G)$, and $0$ otherwise.
It will be interesting to study the determinantal ideals of this matrix, since they generalize the determinantal ideals presented in Section~\ref{sec:determinantalIdeals}.
The work of Katzman \cite{katzman} can be regarded as the determinantal ideals of $X(G)$ with $G$ a complete digraph with loops.
This research direction can also be linked with the minimum rank and determinantal varieties in algebraic and tropical geometry \cite{ms}.

\section{Codeterminantal  graphs}\label{sec:cospectral}

Codeterminantal graphs have already been defined in Section \ref{sec:determinantalIdeals}. In fact, the following definition is equivalent to Definition~\ref{def:codeterminatalmatrix}, but here we need to introduce a slightly different notation in order to specify the matrix and the ring in which we are working on.

\begin{definition}
Let $M$ be either of the adjacency, Laplacian, distance or distance Laplacian matrices. Two graphs $G$ and $H$ are  $M_x^\mathcal{R}$-{\it codeterminantal} if $I^\mathcal{R}_k(M_x(G))=I^\mathcal{R}_k(M_x(H))$ for each $k\in[n]$. We say that $G$ and $H$ are $M_x^\mathcal{R}$-{\it codeterminantal mates} if $G$ and $H$ are $M_x^\mathcal{R}$-codeterminantal.
\end{definition}

In Section \ref{sec:determinantalIdeals} we showed that there exists no pair of codeterminantal graphs with respect the critical ideals or the distance ideals.
In this section, we shall explore the notion of codeterminantal for matrices associated to graphs with at most one indeterminate.

In particular, when the matrix has no indeterminate, we have that two graphs $G$ and $H$ are $M^\mathcal{R}$-{\it codeterminantal} if $I^\mathcal{R}_k(M(G))=I^\mathcal{R}_k(M(H))$ for each $k\in[n]$. Thus $G$ and $H$ are $M^\mathcal{R}$-{\it codeterminantal mates} if $G$ and $H$ are $M^\mathcal{R}$-codeterminantal.
When $\mathcal{R}=\mathbb{Z}$, the ideal $I^\mathcal{R}_k(M(G))$ is generated by $\Delta_k(M)=\gcd(\minors_k(M))$.
This setting will also be explored. 

Returning to the univariate case, the following example illustrates that the there are subtle differences in taking different coefficients of the polynomial ring.

\begin{example}\label{example:differencechangingcoefficientspolynomialring}
	Let $G_1$ and $G_2$ be the graphs shown in Fig.~\ref{Fig:CIAndIF}.

\begin{figure}[ht]
	{
	\begin{center}
	\begin{tabular}{c@{\extracolsep{3cm}}c}
		\begin{tikzpicture}[scale=.7]
			\tikzstyle{every node}=[minimum width=0pt, inner sep=2pt, circle,thick]
			\tikzstyle{every path}=[thick]
			\draw (180:2) node[draw] (v1) {};
 			\draw (0:1) node[draw] (v5) {};
 			\draw (90:1) node[draw] (v3) {};
 			\draw (180:1) node[draw] (v2) {};
 			\draw (270:1) node[draw] (v4) {};
			\draw (0:2) node[draw] (v6) {};
			\draw (v1) to (v2);
 			\draw (v2) to (v3);
 			\draw (v2) to (v4);
 			\draw (v3) to (v5);
 			\draw (v4) to (v5);
			\draw (v3) to (v4);
			\draw (v5) to (v6);
		\end{tikzpicture}
		&
		\begin{tikzpicture}[scale=.7]
			\tikzstyle{every node}=[minimum width=0pt, inner sep=2pt, circle,thick]
			\tikzstyle{every path}=[thick]
			\draw (-1,-1) node[draw] (v1) {};
 			\draw (1,1) node[draw] (v5) {};
 			\draw (0,0) node[draw] (v3) {};
 			\draw (-1,1) node[draw] (v2) {};
 			\draw (0,-1) node[draw] (v4) {};
			\draw (1,-1) node[draw] (v6) {};
			\draw (v1) to (v2);
 			\draw (v1) to (v3);
 			\draw (v2) to (v3);
 			\draw (v3) to (v4);
 			\draw (v3) to (v5);
			\draw (v3) to (v6);
			\draw (v5) to (v6);
		\end{tikzpicture}
		\\
		\\
		$G_1$ & $G_2$
	\end{tabular}
	\end{center}
	}
	\caption{A pair of $A_x^\mathbb{R}$-codeterminantal graphs that are not $A_x^\mathbb{Z}$-codeterminantal.
	}
	\label{Fig:CIAndIF}
\end{figure}
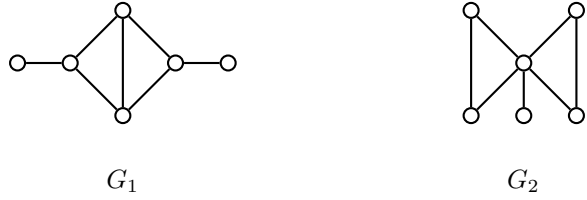

	These graphs are the unique pair of $A_x^{\mathbb{R}}$-codeterminantal graphs with $6$ vertices.
	\[\small
		I^\mathbb{R}_k\left(A_x(G_1)\right)=I^\mathbb{R}_k\left(A_x(G_2)\right)=
		\begin{cases}
		\langle 1\rangle & \text{ if } 1\leq k \leq 4,\\
		\langle x+1\rangle & \text{ if } k=5,\\
		\langle (x-1) (x+1)^2 (x^3-x^2-5x+1)\rangle & \text{ if } k=6,
		\end{cases}
	\]
	but when the base ring is $\mathbb{Z}[x]$, we observe that the characteristic ideals are different, and they are no longer codeterminantal:
	\[\small
		I^\mathbb{Z}_k\left(A_x(G_1)\right)=
		\begin{cases}
		\langle 1\rangle & \text{ if } 1\leq k \leq 4,\\
		\langle 2(x+1), (x+1) (x^2+1)\rangle & \text{ if } k=5,\\
		\langle (x-1) (x+1)^2 (x^3-x^2-5x+1)\rangle & \text{ if } k=6,
		\end{cases}
	\]
	and
	\[\small
		I^\mathbb{Z}_k\left(A_x(G_2)\right)=
		\begin{cases}
		\langle 1\rangle & \text{ if } 1\leq k \leq 3,\\
		\langle 2,(x+1)\rangle & \text{ if } k=4,\\
		\langle 4(x+1), (x+1) (x-3)\rangle & \text{ if } k=5,\\
		\langle (x-1) (x+1)^2 (x^3-x^2-5x+1)\rangle & \text{ if } k=6.
		\end{cases}
	\]
However, their varieties, in $\mathbb{R}$, for each $k$ are the same, that is
\[
V\left(I^\mathbb{Z}_k\left(A_x(G_1)\right)\right)=V\left(I^\mathbb{Z}_k\left(A_x(G_2)\right)\right),
\]
for each $k\in [n]$.
\end{example}

Actually, the last observation holds in general: if two graphs $G$ and $H$ are $M^\mathbb{R}_x$-codeterminantal, then it holds that
$$V\left(I^\mathbb{R}_k\left(M_x(G)\right)\right)=V\left(I^\mathbb{R}_k\left(M_x(H)\right)\right)=V\left(I^\mathbb{Z}_k\left(M_x(G)\right)\right)=V\left(I^\mathbb{Z}_k\left(M_x(H)\right)\right).$$ 
However, as shown in Example~\ref{example:differencechangingcoefficientspolynomialring}, the converse is not always true.

As aforementioned, not many relationships between the spectrum and the SNF are known. The next result contributes in this direction by presenting a necessary and sufficient condition for two graphs to be $M^\mathbb{R}_x$-codeterminantal.. 

\begin{theorem}
The graphs $G$ and $H$ are $M^\mathbb{R}_x$-codeterminantal graphs if and only if 
$V\left(I^\mathbb{Z}_k\left(M_x(G)\right)\right)=V\left(I^\mathbb{Z}_k\left(M_x(H)\right)\right)$.
\end{theorem}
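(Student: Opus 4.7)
The plan is to prove the two implications separately.

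$(\Rightarrow)$ The forward direction is immediate: $M_x^\mathbb{R}$-codeterminantality means $I_k^\mathbb{R}(M_x(G))=I_k^\mathbb{R}(M_x(H))$ for every $k$, so their varieties in $\mathbb{R}$ coincide, and Proposition~\ref{prop:equalvarietiesunivariate} transports this equality back to the varieties of the $\mathbb{Z}[x]$-ideals.

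$(\Leftarrow)$ The key structural observation for the converse is that when $M\in\{A,L,D,F\}$ the matrix $M(G)$ is real symmetric, hence diagonalizable, so its minimal polynomial is square-free. Since the minimal polynomial of $M(G)$ coincides with the largest invariant factor $f_n^G$ of the SNF of $M_x(G)$ over the PID $\mathbb{R}[x]$ (Theorem~\ref{teo:snf}), and since $f_1^G\mid f_2^G\mid\cdots\mid f_n^G$, every invariant factor $f_i^G$ divides $f_n^G$ and is therefore itself square-free with all roots in $\mathbb{R}$. The same holds for $H$.

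Writing $f_i^G=\prod_{\lambda\in R_i^G}(x-\lambda)$ where $R_i^G\subseteq\mathbb{R}$ is its root set, Theorem~\ref{theo:edt} identifies $I_k^\mathbb{R}(M_x(G))=\langle f_1^G\cdots f_k^G\rangle$. Its variety equals $R_1^G\cup\cdots\cup R_k^G$, and the divisibility chain forces $R_i^G\subseteq R_k^G$ for $i\leq k$, so the variety reduces to $R_k^G$ itself. Combining Proposition~\ref{prop:equalvarietiesunivariate} with the hypothesis yields
\[
R_k^G=V(I_k^\mathbb{R}(M_x(G)))=V(I_k^\mathbb{Z}(M_x(G)))=V(I_k^\mathbb{Z}(M_x(H)))=R_k^H
\]
for every $k$. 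Because each square-free monic polynomial is determined by its root set, we get $f_k^G=f_k^H$ for all $k$, hence the generators of $I_k^\mathbb{R}(M_x(G))$ and $I_k^\mathbb{R}(M_x(H))$ agree, which is precisely $M_x^\mathbb{R}$-codeterminantality.

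The one step that requires care is the square-freeness of the invariant factors. Without it, the variety (as a set, with no multiplicity data) would not suffice to pin down the generator of the principal $\mathbb{R}[x]$-ideal, and the converse implication could fail. Once this structural fact is in hand, the remainder of the argument is a clean bookkeeping on root sets chained through the inclusions of Proposition~\ref{prop:chaininclusionideals}.
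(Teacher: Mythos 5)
Your proof is correct, and its overall strategy is the paper's: both directions pass through Proposition~\ref{prop:equalvarietiesunivariate}, and the converse is settled by reconstructing the principal generators of $I_k^\mathbb{R}(M_x(\cdot))$ from the varieties. The difference is in how that reconstruction is justified, and your version is the more accurate one. The paper's ``if'' direction asserts that the generator of $I_k^\mathbb{R}(M_x(G))$ is $\prod_{\lambda\in V(I_k^\mathbb{Z}(M_x(G)))}(x-\lambda)$; taken literally this is false in general, since the generator is $\Delta_k(M_x(G))=f_1\cdots f_k$, which may have repeated roots --- for $k=n$ it is the characteristic polynomial, e.g.\ $(x-2)(x+1)^2$ for $A_x(K_3)$, whereas the displayed product is square-free. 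Your argument supplies exactly the missing step: symmetry of $M(G)$ gives a square-free minimal polynomial with real roots, hence every invariant factor $f_k$ (dividing $f_n$) is square-free, so the variety $V(I_k)$, a priori only the root set of $\Delta_k$, equals the root set of $f_k$ and determines $f_k$; the generator is then recovered as $\Delta_k=f_1\cdots f_k$, i.e.\ as $\prod_{i\le k}\prod_{\lambda\in V(I_i)}(x-\lambda)$ rather than the paper's single product. The only point to flag is your use of the standard fact that $f_n$ is the minimal polynomial of $M(G)$; it is not stated in the paper, but it is classical (or can be bypassed by noting that diagonalizability makes all elementary divisors of $M_x(G)$ over $\mathbb{R}[x]$ linear), so your argument stands and in fact repairs the paper's imprecision.
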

\begin{proof}
If $G$ and $H$ are $M^\mathbb{R}_x$-codeterminantal, then their determinantal ideals are the same and thus $V\left(I^\mathbb{R}_k\left(M_x(G)\right)\right)=V\left(I^\mathbb{R}_k\left(M_x(H)\right)\right)$ for each $k$.
By Proposition~\ref{prop:equalvarietiesunivariate} we have $V\left(I^\mathbb{Z}_k\left(M_x(G)\right)\right)=V\left(I^\mathbb{R}_k\left(M_x(G)\right)\right)$ and $V\left(I^\mathbb{Z}_k\left(M_x(H)\right)\right)=V\left(I^\mathbb{R}_k\left(M_x(H)\right)\right)$ from the only if part follows.
The if part follows since $I^\mathbb{R}_k\left(M_x(G)\right)$ is principal and the generator is the polynomial $\prod_{\lambda\in V\left(I^\mathbb{Z}_k\left(M_x(G)\right)\right)}(x-\lambda)$.
\end{proof}

\begin{table}[h]
\begin{tabular}{cc|cc|cc|cc|cc}
$n$ & $N$    & $A^\mathbb{Q}_x$   & $A^\mathbb{Z}_x$ & $L^\mathbb{Q}_x$   & $L^\mathbb{Z}_x$ & $D^\mathbb{Q}_x$   & $D^\mathbb{Z}_x$ & $F^\mathbb{Q}_x$   & $F^\mathbb{Z}_x$ \\ \hline
5   & 21     & 0     & 0              & 0     & 0              & 0     & 0              & 0     & 0              \\
6   & 112    & 2     & 0              & 4     & 2              & 0     & 0              & 0     & 0              \\
7   & 853    & 63    & 6              & 115   & 14             & 22    & 0              & 43    & 8             \\
8   & 11117  & 1353  & 464            & 1611  & 280            & 658   & 186            & 745   & 130            \\
9   & 261080 & 46930 & 17894          & 40560 & 14935          & 25058 & ?
& 20455 & ?
\end{tabular}
\caption{Number of connected graphs with a $M_x^\mathcal{R}$-codeterminantal mate for various matrices over $\mathbb{Q}[x]$ and $\mathbb{Z}[x]$.
The number of vertices is denoted by $n$ and the number of connected graphs with $n$ vertices is denoted by $N$.}
\label{tab:statisticscodeterminantalideals}
\end{table}

Table \ref{tab:statisticscodeterminantalideals} shows the number of graphs with a $M_x^\mathcal{R}$-codeterminantal mate. In this work we focus in the cases when the ring $\mathcal{R}$ is either $\mathbb{Q}$ and $\mathbb{Z}$.
We observe that there are less $M_x^\mathbb{Z}$-codeterminantal graphs than $M_x^\mathbb{R}$-codeterminantal graphs.
In fact, it seems that univariate determinantal ideals with coefficients in $\mathbb{Z}[x]$ are the best algebraic invariant to distinguish graphs. 

It is worth to say that, aside that computing a determinantal ideal could take something between few seconds to 2 minutes, computing Table~\ref{tab:statisticscodeterminantalideals} has its difficulties, see Appendix~\ref{appendix:difficulties} for an account of some of them.

Now we recall the definition of cospectral graphs.

\begin{definition}
Let $M$ be either the adjacency, Laplacian, distance or distance Laplacian matrices.
Two graphs $G$ and $H$ are $M$-\emph{cospectral} if $M(G)$ and $M(H)$ have the same $M$-spectrum.
\end{definition}

We note that Table \ref{tab:statisticscodeterminantalideals} only calculates the number of codeterminental graphs through nine vertices, though cospectral graphs have been enumerated up to twelve vertices. This is due to the greatly increased computational complexity of considering all minors of the matrix.

\begin{table}[h]
	{
    \begin{tabular}{rccccc}
		\hline
        Number of vertices & 5 & 6 & 7 & 8 & 9 \\
        \hline
        Number of connected graphs & 21 & 112 & 853 & 11117 & 261080 \\
        \hline
        Adjacency & 0 & 2 & 63 & 1353 & 46930 \\
        Laplacian & 0 & 4 & 115 & 1611 & 40560 \\
        Distance & 0 & 0 & 22 & 658 & 25058 \\
        Distance Laplacian & 0 & 0 & 43 & 745 & 20455 \\

        \hline
	\end{tabular}
	}
\caption{Number of connected graphs with a cospectral mate for the adjacency, Laplacian, distance and the distance Laplacian matrices.}
	\label{Tab:cospectralgraphs_2}
\end{table}

Results in tables \ref{tab:statisticscodeterminantalideals} (for $\mathbb{Q}$) and \ref{Tab:cospectralgraphs_2} provide numerical evidence for Theorem \ref{theo:equivalencecospectralcodeterminantal}, which characterizes when $M_x^\mathbb{Q}$-codeterminental graphs are cospectral. In order to prove Theorem \ref{theo:equivalencecospectralcodeterminantal}, we will use the following result from
\cite{JN}.

\begin{theorem}\cite{JN}\label{theo:exitanceorthogonal}
If two graphs $G$ and $H$ are cospectral, then there exists an orthogonal matrix $U$ such that $M(G) = U^T M(H) U$.
\end{theorem}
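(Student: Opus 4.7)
The plan is to invoke the spectral theorem for real symmetric matrices, since the matrices $M(G)$ and $M(H)$ (adjacency, Laplacian, distance, or distance Laplacian) are all real symmetric. By the spectral theorem, I can write $M(G) = Q_1 D_1 Q_1^T$ and $M(H) = Q_2 D_2 Q_2^T$ where $Q_1, Q_2$ are orthogonal matrices whose columns are orthonormal eigenvectors, and $D_1, D_2$ are diagonal matrices containing the eigenvalues of $M(G)$ and $M(H)$, respectively.

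Next, I would use the cospectrality hypothesis: $M(G)$ and $M(H)$ have the same spectrum (with multiplicities), so the diagonal entries of $D_1$ are the same as those of $D_2$ up to reordering. Hence there is a permutation matrix $P$ with $D_1 = P D_2 P^T$. Substituting this into the first factorization gives
\[
M(G) = Q_1 P D_2 P^T Q_1^T = (Q_1 P) D_2 (Q_1 P)^T.
\]
Now both $M(G)$ and $M(H)$ are expressed as a conjugation of the same diagonal matrix $D_2$ by orthogonal matrices, namely $Q_1 P$ and $Q_2$.

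Finally, I would set $U := Q_2 P^T Q_1^T$, which is orthogonal as a product of orthogonal matrices (permutation matrices being orthogonal). A direct computation gives
\[
U^T M(H) U = (Q_1 P Q_2^T)(Q_2 D_2 Q_2^T)(Q_2 P^T Q_1^T) = Q_1 P D_2 P^T Q_1^T = M(G),
\]
as desired.

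The argument is essentially bookkeeping once the spectral theorem is in hand; the only subtle point is handling eigenvalues with multiplicity greater than one, since then the diagonalizing orthogonal matrices $Q_1, Q_2$ are not unique. The permutation matrix $P$ takes care of reordering the eigenvalues in $D_2$ so that the multiset matches $D_1$, and within an eigenspace of dimension greater than one any orthonormal basis is acceptable, so no further choice is needed. Thus I expect no real obstacle beyond careful notation.
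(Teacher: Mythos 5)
Your argument is correct: orthogonally diagonalizing both matrices via the spectral theorem, using cospectrality to match the two diagonal matrices up to a permutation, and then assembling $U = Q_2 P^T Q_1^T$ is exactly the standard way to see that two real symmetric matrices with the same spectrum are orthogonally similar, and your computation $U^T M(H) U = M(G)$ checks out. The paper itself gives no proof of this statement, citing Johnson and Newman instead, and your spectral-theorem argument is precisely the expected one, so there is nothing to add beyond noting that one could simplify slightly by listing both spectra in a fixed order, making the permutation matrix $P$ unnecessary.
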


\begin{theorem}\label{theo:equivalencecospectralcodeterminantal}
Two graphs $G$ and $H$ are $M$-cospectral if and only if the graphs are $M_x^\mathbb{Q}-codeterminantal$.
\end{theorem}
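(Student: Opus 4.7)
The plan is to establish the two implications separately and then point out where the subtle issue arises.

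For the forward direction, I start by assuming $G$ and $H$ are $M$-cospectral. Theorem~\ref{theo:exitanceorthogonal} supplies an orthogonal matrix $U$ with real entries such that $M(G) = U^{\top} M(H) U$. Replacing $M$ by $xI_n - M$ on both sides (which only shifts the diagonal), I obtain $M_x(G) = U^{\top} M_x(H) U$, and since $U$ is orthogonal I also have $M_x(H) = U M_x(G) U^{\top}$. Applying Theorem~\ref{theo:equivalentcodeterminantal} with $\mathcal{R}=\mathbb{R}$ then yields $I^{\mathbb{R}}_k(M_x(G)) = I^{\mathbb{R}}_k(M_x(H))$ for every $k\in[n]$, i.e.\ $G$ and $H$ are $M_x^{\mathbb{R}}$-codeterminantal.

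The step I expect to require the most care is the descent from $\mathbb{R}[x]$ to $\mathbb{Q}[x]$. Since the entries of $M_x(G)$ and $M_x(H)$ lie in $\mathbb{Z}[x] \subseteq \mathbb{Q}[x]$, every $k$-minor is a polynomial in $\mathbb{Q}[x]$. By Proposition~\ref{ourresult1}, both $I^{\mathbb{Q}}_k(M_x(G))$ and $I^{\mathbb{R}}_k(M_x(G))$ are principal, generated respectively by $\Delta^{\mathbb{Q}}_k$ and $\Delta^{\mathbb{R}}_k$, the gcds of the $k$-minors in the corresponding polynomial ring. The classical fact that the gcd of finitely many polynomials in $\mathbb{Q}[x]$ is unchanged (up to a nonzero scalar) when the computation is performed in $\mathbb{R}[x]$ — essentially because the Euclidean algorithm stays inside $\mathbb{Q}[x]$ — gives $\Delta^{\mathbb{Q}}_k = c\,\Delta^{\mathbb{R}}_k$ for some $c\in\mathbb{R}^{*}$, for both $G$ and $H$. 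Combining this with the $\mathbb{R}[x]$-equality already established shows that $\Delta^{\mathbb{Q}}_k(M_x(G))$ and $\Delta^{\mathbb{Q}}_k(M_x(H))$ differ by a unit of $\mathbb{Q}[x]$, which forces $I^{\mathbb{Q}}_k(M_x(G)) = I^{\mathbb{Q}}_k(M_x(H))$.

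The reverse implication is essentially immediate: if $G$ and $H$ are $M_x^{\mathbb{Q}}$-codeterminantal, then specializing to $k=n$ yields $I^{\mathbb{Q}}_n(M_x(G)) = I^{\mathbb{Q}}_n(M_x(H))$. This ideal is principal and is generated by $\det(M_x(G))$ (respectively $\det(M_x(H))$), which is the monic characteristic polynomial of $M(G)$ (respectively $M(H)$). Equality of two monic generators of the same principal ideal in $\mathbb{Q}[x]$ forces equality of the polynomials themselves, and hence of their root multisets, which is exactly $M$-cospectrality. The only real obstacle in the whole argument is the field-descent step in the forward direction; everything else follows by direct invocation of the structural results already proved.
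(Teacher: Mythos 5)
Your proposal is correct and takes essentially the same route as the paper: the forward direction via the Johnson--Newman orthogonal matrix of Theorem~\ref{theo:exitanceorthogonal} combined with Theorem~\ref{theo:equivalentcodeterminantal}, and the converse by reading off the characteristic polynomial from the $n$-th ideal. The only difference is that you spell out the descent from $\mathbb{R}[x]$-codeterminantal to $\mathbb{Q}[x]$-codeterminantal (gcds of rational polynomials are unchanged, up to a scalar, when computed over $\mathbb{R}$), a step the paper's proof leaves implicit.
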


\begin{proof}
Let $G$ and $H$ be two $M$-cospectral graphs.
By Theorem \ref{theo:exitanceorthogonal} there exists an orthogonal matrix $U$ such that $M(G) = U^T M(H) U$.
Since $U^{-1}=U^T$, then applying Theorem~\ref{theo:equivalentcodeterminantal}, $M(G)$ and $M(H)$ are codeterminantal. The other direction is trivial.
\end{proof}

Table \ref{Tab:cospectralgraphs_2} provides the number of cospectral mates of a graph with respect of several associated matrices. In \cite{ah} it is reported that there are 19778 cospectral graphs with 9 vertices with respect to the distance Laplacian matrix. However, in our computation we obtain 20455 cospectral graphs (see Table~\ref{Tab:cospectralgraphs_2}), and this number coincides with the one in Table~\ref{tab:statisticscodeterminantalideals} (as expected by Theorem \ref{theo:equivalencecospectralcodeterminantal}). Hence,  we confirm that the number reported in~\cite{ah} is incorrect.



Next, we adopt the notation in \cite{vince} to introduce the definition of $M$-coinvariant graphs. 

\begin{definition}
Let $M$ be either  the adjacency, Laplacian, distance or distance Laplacian matrices.
Two graphs $G$ and $H$ are $M$-{\it coinvariant} if the SNFs computed over $\mathbb{Z}$ of $M(G)$ and $M(H)$ are the same.
\end{definition}

If we wish to compute the generator of the $k$-th determinantal ideal $I_k^\mathbb{Z}(M)$ of an integer matrix $M$ (without indeterminate), we can just apply Corollary~\ref{theo:determinatalevaluatedsingleind}. 
Thus if $f_1 \ldots f_n$ are the elements in the diagonal of the SNF, then the generator of $I_k^\mathbb{Z}(M)$ is equal to $\prod_{j=1}^k f_j$, which coincides with $\Delta_k=\gcd(\minors_k(M))$. This avoids computing all $k$-minors. 
Based on this, we conclude that coinvariant coincides with $M^\mathbb{Z}$-codeterminantal. 

\begin{theorem}\label{theo:equivalencecoinvariantcodeterminantal}
Two graphs $G$ and $H$ are $M$-coinvariant if and only if the graphs are $M^\mathbb{Z}-codeterminantal$.
\end{theorem}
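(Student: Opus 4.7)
The plan is to reduce both conditions to the single statement that the sequences $\Delta_k(M(G))$ and $\Delta_k(M(H))$ agree for all $k$, using the fact that $\mathbb{Z}$ is a PID and invoking the Elementary Divisors Theorem (Theorem~\ref{theo:edt}).

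First I would unpack the two definitions. Being $M^\mathbb{Z}$-codeterminantal means $I_k^\mathbb{Z}(M(G))=I_k^\mathbb{Z}(M(H))$ for every $k\in[n]$. Since $\mathbb{Z}$ is a PID, each ideal $I_k^\mathbb{Z}(M)$ is principal; and by Corollary~\ref{theo:determinatalevaluatedsingleind} (or directly by Theorem~\ref{theo:edt}) its generator is, up to units in $\mathbb{Z}$, the nonnegative integer $\Delta_k(M)=\gcd(\minors_k(M))$. Therefore $I_k^\mathbb{Z}(M(G))=I_k^\mathbb{Z}(M(H))$ holds simultaneously for all $k$ if and only if $\Delta_k(M(G))=\Delta_k(M(H))$ for all $k$.

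Next I would translate coinvariance to the $\Delta_k$'s. By Theorem~\ref{theo:edt}, the $k$-th invariant factor is $f_k(M)=\Delta_k(M)/\Delta_{k-1}(M)$ with $\Delta_0(M)=1$, and conversely $\Delta_k(M)=\prod_{j=1}^k f_j(M)$. Consequently the sequence $(f_1(M),\dots,f_n(M))$ and the sequence $(\Delta_1(M),\dots,\Delta_n(M))$ determine each other. So equal SNFs of $M(G)$ and $M(H)$ (i.e.\ $M$-coinvariance) is equivalent to $\Delta_k(M(G))=\Delta_k(M(H))$ for every $k$.

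Combining the two equivalences gives the theorem in both directions. There is no genuine obstacle here; the only subtlety worth flagging is the usual one that ideal generators in $\mathbb{Z}$ are determined only up to sign, which is harmless because the $\Delta_k$ are taken to be nonnegative gcds. I would close by noting the practical payoff already mentioned in the paragraph preceding the statement: in the no-indeterminate setting one never needs to enumerate all $k$-minors to check codeterminantality, since computing the SNF of the integer matrix already yields the generators $\Delta_k=\prod_{j=1}^k f_j$ of the ideals $I_k^\mathbb{Z}(M)$.
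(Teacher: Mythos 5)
Your argument is correct and is essentially the paper's own: the paper also observes that over the PID $\mathbb{Z}$ each ideal $I_k^\mathbb{Z}(M)$ is principal with generator $\Delta_k=\gcd(\minors_k(M))=\prod_{j=1}^k f_j(M)$, so equality of all the ideals is the same as equality of all the $\Delta_k$, which by the Elementary Divisors Theorem is the same as equality of the invariant factors. Your write-up just makes explicit (including the harmless sign ambiguity) what the paper states as an immediate consequence of that observation.
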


Note that two graphs $G$ and $H$ being $M$-coinvariant implies that the cokernel of $M(G)$ and $M(H)$ are isomorphic, in particular the torsion part of the cokernel are also isomorphic.
The converse is not always true, since $M$-coinvariant constraints the number of vertices of the graphs to be the same.
An interesting example comes from the Laplacian matrix, where it holds that if $H$ is a dual of a planar graph $G$, then the critical groups of $G$ and $H$ are isomorphic \cite{cori,vince}.
The computation of the invariant factors of the Laplacian matrix is an important technique used for the understanding of the critical group and the graph properties.
The number of zeros in the diagonal of the SNF of the Laplacian matrix gives the number of connected components, meanwhile the multiplication of the invariant factors gives the number of spanning trees.

Several researchers have addressed the question of how often the critical group is cyclic. 
In \cite{lorenzini2008} and \cite{wagner} Lorenzini and Wagner, based on numerical data, suggest that we could expect to find a substantial proportion of graphs having a cyclic critical group.
Based on this, Wagner conjectured \cite{wagner} that almost every connected simple graph has a cyclic critical group.
A recent study \cite{wood} concluded that the probability that the critical group of a random graph is cyclic is asymptotically at most
\[
	\zeta(3)^{-1}\zeta(5)^{-1}\zeta(7)^{-1}\zeta(9)^{-1}\zeta(11)^{-1}\cdots \approx 0.7935212,
\]
where $\zeta$ is the Riemann zeta function; differing from Wagner's conjecture. 

\begin{table}[h]
	{
    \begin{tabular}{rcccccc}
		\hline
        Number of vertices & 4 & 5 & 6 & 7 & 8 & 9 \\
        \hline
        Number of connected graphs & 6 & 21 & 112 & 853 & 11117 & 261080 \\
        \hline
        Adjacency          & 4 & 20 & 112 & 853 & 11117 & 261061 \\
        Laplacian          & 2 &  8 &  57 & 526 &  8027 & 221830  \\
        Distance           & 2 & 15 & 102 & 835 & 11080 & 260771   \\
        Distance Laplacian & 0 &  0 &   0 &  18 &   455 & 16501  \\

        \hline
	\end{tabular}
	}
\caption{Number of connected graphs with a coinvariant mate for the adjacency, Laplacian, distance and the distance Laplacian matrices.}
	\label{Tab:coinvariantgraphs}
\end{table}

Table \ref{Tab:coinvariantgraphs} provides the number of $M$-coinvariant mates with the same number of vertices.
Biggs suggested in \cite{biggs1999} that the SNF can be used to distinguish graphs in cases where other algebraic invariants, such as those derived from the spectrum, fail.
Looking at Table \ref{Tab:coinvariantgraphs}, we observe that the SNF of the adjacency, Laplacian and distance matrices may not be good in distinguishing graphs, since for these matrices almost all graphs with at most 9 vertices have a coinvariant mate. On the other hand, for the distance Laplacian seems there is more hope for such characterization. Comparing the values from Table~\ref{Tab:cospectralgraphs_2}, we observe that the SNF of the distance Laplacian seems to perform better for distinguishing graphs than its spectrum.

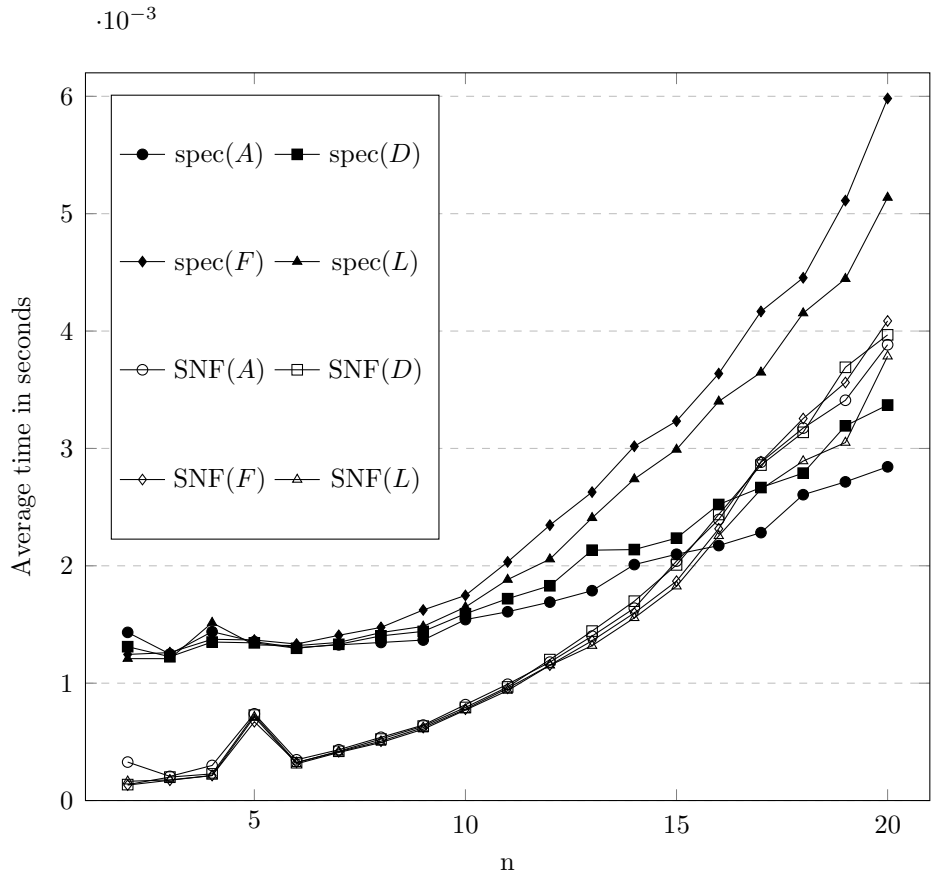
\begin{figure}[ht]
    \centering
    \begin{tikzpicture}[trim axis left]
\begin{axis}[
    scale only axis,
    title={},
    xlabel={n},
    ylabel={Average time in seconds},
    y label style={at={(axis description cs:0.1,.5)},anchor=south},
    width=\textwidth-0.9cm, 
    xmin=1, xmax=21,
    ymin=0.0, ymax=0.0062,
    xtick={5,10,15,20},
    legend pos=north west,
    ymajorgrids=true,
    grid style=dashed,
    legend columns=2
]
 
\addplot[
    mark=*,
    legend entry={ $\spec(A)$} 
    ]
    coordinates {
    (2,0.001432896)(3,0.001248002)(4,0.001438459)(5,0.001353673)(6,0.001302683)(7,0.001326613)(8,0.001346801)(9,0.00136734)(10,0.001541712)(11,0.001608491)(12,0.001690675)(13,0.001787899)(14,0.002010369)(15,0.002098544)(16,0.002172303)(17,0.002282576)(18,0.002605789)(19,0.00271514)(20,0.002843288)
    };
 
\addplot[
    mark=square*,
    legend entry={ $\spec(D)$} 
    ]
    coordinates {
    (2,0.001310825)(3,0.001225114)(4,0.001350443)(5,0.001343614)(6,0.001297551)(7,0.001331156)(8,0.00140287)(9,0.001441231)(10,0.001591571)(11,0.001719832)(12,0.001829848)(13,0.002133098)(14,0.002138368)(15,0.002235616)(16,0.002522465)(17,0.002666772)(18,0.002789515)(19,0.003192448)(20,0.003369964)
    };

\addplot[
    mark=diamond*,
    legend entry={ $\spec(F)$} 
    ]
    coordinates {
    (2,0.001245022)(3,0.001260519)(4,0.001374046)(5,0.001367898)(6,0.001333422)(7,0.001407776)(8,0.001475205)(9,0.001623615)(10,0.001747802)(11,0.002033114)(12,0.002346251)(13,0.002627918)(14,0.003018506)(15,0.003232839)(16,0.003637701)(17,0.004167122)(18,0.004453677)(19,0.005111485)(20,0.00598164)
    };
    
\addplot [
    mark=triangle*,
    legend entry={ $\spec(L)$} 
    ]
    coordinates {
    (2,0.001209021)(3,0.001208901)(4,0.00151515)(5,0.001326164)(6,0.001320056)(7,0.00134733)(8,0.001431079)(9,0.001483793)(10,0.001648373)(11,0.001882418)(12,0.002056349)(13,0.002408438)(14,0.002739031)(15,0.002990731)(16,0.003400665)(17,0.003647823)(18,0.004152752)(19,0.004445181)(20,0.005137276)
    };

\addplot [
    mark=o,
    legend entry={ $\SNF(A)$} 
    ]
    coordinates {
    (2,0.000327826)(3,0.000205994)(4,0.000299017)(5,0.000739018)(6,0.000347208)(7,0.00043298)(8,0.000538901)(9,0.000643009)(10,0.00081702)(11,0.000990981)(12,0.001179206)(13,0.001401)(14,0.00163887)(15,0.002040055)(16,0.00239467)(17,0.002882401)(18,0.00317337)(19,0.00341027)(20,0.003883558)
    };
\addplot [
    mark=square,
    legend entry={ $\SNF(D)$} 
    ]
    coordinates {
    (2,0.000135899)(3,0.000199914)(4,0.000227133)(5,0.000728062)(6,0.000322295)(7,0.000419952)(8,0.000523954)(9,0.000634249)(10,0.000793129)(11,0.000967963)(12,0.001200774)(13,0.001443802)(14,0.001698554)(15,0.002007575)(16,0.002437359)(17,0.002858075)(18,0.003137355)(19,0.0036912)(20,0.00396753)
    };
\addplot [
    mark=diamond,
    legend entry={ $\SNF(F)$} 
    ]
    coordinates {
    (2,0.000131845)(3,0.000172973)(4,0.000212987)(5,0.000671364)(6,0.000325194)(7,0.000415596)(8,0.000505762)(9,0.000621488)(10,0.000778582)(11,0.000956125)(12,0.001153547)(13,0.001361445)(14,0.001597713)(15,0.001869193)(16,0.002315442)(17,0.002886916)(18,0.003254736)(19,0.003561589)(20,0.004086207)
    };
\addplot [
    mark=triangle,
    legend entry={ $\SNF(L)$} 
    ]
    coordinates {
    (2,0.000164032)(3,0.000176072)(4,0.000213265)(5,0.00071303)(6,0.000312473)(7,0.000409506)(8,0.00049432)(9,0.000607779)(10,0.00077111)(11,0.00093707)(12,0.001152352)(13,0.001319503)(14,0.001556238)(15,0.00182564)(16,0.002255071)(17,0.002651947)(18,0.002892255)(19,0.003049804)(20,0.003784927)
    };
 
\end{axis}
\end{tikzpicture}
    \caption{The average time of computing the spectrum and the Smith normal form of several matrices associated to graphs with $n$ vertices.}
    \label{fig:averagetime}
\end{figure}

It is well known that computing the characteristic polynomial or the SNF of an integer matrix can be done in polynomial time.
We carried out a performance analysis, as it is shown in Figure~\ref{fig:averagetime}, where we report the average time taken in the computation of these two properties over matrices of randomly generated connected graphs.
For every generated graph we used four of its associated matrices, the Adjacency (A), Laplacian (L), Distance (D) and Distance Laplacian (F) matrices.
In Figure \ref{fig:averagetime}, filled markers are for the characteristic polynomial and empty markers are for the SNF.
Every point in the plot represents the average time of a subset of connected graphs on $n$ vertices. 

We considered all the connected graphs up to $9$ vertices. From $n=10$ vertices and above, the number of connected graphs is substantially large, hence we developed a model to randomly generate samples of the graphs. 
A random connected graph can be generated as follows: given $n$ vertices we generate a random spanning tree and then append each of the other possible edges with probability $p$. 
A random graph depends on a parameter $p$ other than the number of vertices, which measures the density of the graph ($p=0$ for a minimal connected graph and $p=1$ for a complete graph). 
The distribution of the number of edges of all the connected graphs on $n$ vertices behave similar to a binomial distribution $Bin(m, 0.5)$, where $m$ is the number of edges of the complete graph on $n$ vertices.
To replicate this behavior, the density of each random graph on $n$ vertices was $d=Bin(m, 0.5) / m$.
The sizes of all our samples are $185,656$ for $n=10,\ldots,20$.

The software used to make the above computations is Python 2.7.15 bundled with Sage 8.8 on a Windows 10 Pro (64-bit), Intel(R) i5-3210M at 2.5 GHz (4 CPUs) and 8 GB RAM machine.

In Figure~\ref{fig:averagetime} we observe that for small $n$ there is a clear advantage in computing the SNF than the spectrum.
However, this tendency is no longer true after $n=15$.
At $n=20$, the worst performance is displayed by $\spec(F)$, meanwhile the best performance is shown by $\spec(A)$.
Not quite far from $\spec(A)$ are $\spec(D)$, $\SNF(A)$, $\SNF(L)$, $\SNF(D)$ and $\SNF(F)$. 
In general, the computation of the SNF of all matrices seems to behave similarly, meanwhile there is a clear difference with the different spectra ($\spec(A)$ and $\spec(D)$ perform better than $\spec(F)$ and $\spec(D)$).

\section{Graphs determined by their determinantal ideals}\label{sec:graphsdeterminedbytheirdeterminantalideals}

Since its introduction by Aouchiche and Hansen in \cite{ah2013}, the distance Laplacian matrix has received quite some attention regarding its spectral properties, see for instance \cite{ah2014,bdhlrsy2019,DAH2018,np2014}. Although from Tables \ref{tab:statisticscodeterminantalideals} and \ref{Tab:coinvariantgraphs} from the previous section we observe that the distance Laplacian matrix provides the best graph invariants in order to characterize graphs, there are not yet many known results on spectral characterizations of graphs using the distance Laplacian matrix of a graph \cite{ah,np2014}. In this section we study families of graphs that are determined by the SNF of the distance Laplacian matrix.  

\begin{theorem}\label{teo:completegraphsaredeterminedbyDL}
Complete graphs are determined by the SNF of the distance Laplacian matrix.
\end{theorem}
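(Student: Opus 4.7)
The plan is to pin down the SNF of $F(K_n)$ exactly, and then show that the constraint this SNF imposes on the spectrum of any graph matching it is enough to force equality with $K_n$.

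First I would observe that every pair of distinct vertices in $K_n$ is at distance one, so $D(K_n) = J - I = A(K_n)$ and $T(K_n) = (n-1)I = \deg(K_n)$; hence $F(K_n) = L(K_n) = nI - J$. By the corollary describing the sandpile group of the complete graph already stated in Section~\ref{sec:charideals}, the SNF of $L(K_n)$ is $\diag(1, n, n, \ldots, n, 0)$ with $n-2$ copies of $n$, so this is also the SNF of $F(K_n)$.

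Next, suppose $G$ is a connected graph on $n$ vertices with $\mathrm{SNF}(F(G)) = \mathrm{SNF}(F(K_n))$. Then $\Delta_{n-1}(F(G)) = n^{n-2}$. Because $F(G)$ is symmetric and satisfies $F(G)\mathbf{1} = 0$, its adjugate has the form $\gamma J$ for some $\gamma\in\mathbb{Z}$: every principal $(n-1)$-minor equals $\gamma$, so $\Delta_{n-1}(F(G)) = |\gamma|$. Comparing $\mathrm{adj}(F(G))\mathbf{1} = n\gamma\mathbf{1}$ with the standard action of the adjugate on the null eigenvector yields $n\gamma = \prod_{i=1}^{n-1}\mu_i$, where $\mu_1 \geq \cdots \geq \mu_{n-1} > 0$ are the nonzero distance Laplacian eigenvalues of $G$. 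Therefore $\prod_{i=1}^{n-1}\mu_i = n\cdot n^{n-2} = n^{n-1}$.

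The remaining step is to invoke the bound of Aouchiche and Hansen \cite{ah2013} stating that for any connected graph on $n$ vertices the smallest nonzero distance Laplacian eigenvalue satisfies $\mu_{n-1} \geq n$. Combined with the product $n^{n-1}$ computed above, this forces every $\mu_i$ to equal $n$, so the spectrum of $F(G)$ is $\{0, n, n, \ldots, n\}$ with the zero eigenvalue supported on $\mathbf{1}$. The spectral decomposition then gives $F(G) = n(I - \tfrac{1}{n}J) = nI - J$, and reading the off-diagonal entries shows that $d_G(u,v) = 1$ for every pair of distinct vertices, i.e., $G = K_n$.

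The main obstacle in this plan is the spectral bound $\mu_{n-1}(F(G)) \geq n$: without it, the multiplicative constraint $\prod_i \mu_i = n^{n-1}$ alone would permit configurations with some eigenvalues below $n$ and others above $n$ that still hit the right geometric mean. Everything else is a routine combination of the adjugate-cofactor identity for matrices with $\mathbf{1}$ in their kernel and the spectral theorem.
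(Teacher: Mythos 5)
Your proof is correct, but it takes a genuinely different route from the paper's. The paper argues through distance ideals: by Proposition~\ref{prop:evalmultiplevariables} and the classification of connected graphs with exactly one trivial distance ideal (Theorem~\ref{teo:clasificationofgraphswith1trivialdistance}), any connected graph whose distance Laplacian SNF has only one invariant factor equal to $1$ must be complete or complete bipartite, and a Gr\"obner-basis computation with the $2$-minors of $F(K_{n,m})$ over $\mathbb{Z}[n,m]$ then excludes the non-complete bipartite candidates; only the first two invariant factors are ever used. You instead exploit the top end of the SNF: from $\operatorname{SNF}(F(G))=\diag(1,n,\dots,n,0)$ you read off $\Delta_{n-1}(F(G))=n^{n-2}$, translate it into the product of the nonzero distance Laplacian eigenvalues via the adjugate identity (legitimate because $F(G)$ is positive semidefinite with $\mathbf{1}$ spanning its kernel for connected $G$ -- a standard fact you use implicitly and should state), and then force the whole spectrum with the Aouchiche--Hansen bound $\mu_{n-1}\ge n$, which is indeed the crux and does hold: $F(G)-(nI-J)$ is the Laplacian of the complete graph with edge weights $d_G(u,v)-1\ge 0$, hence positive semidefinite, so all eigenvalues of $F(G)$ on $\mathbf{1}^{\perp}$ are at least $n$ (see \cite{ah2013,ah2014}). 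What each approach buys: yours is self-contained linear algebra that avoids the classification theorem of \cite{at} and any Gr\"obner computation, and it simultaneously exhibits the SNF--spectrum bridge that is the theme of the paper (in effect showing $K_n$ is already determined by $\Delta_{n-1}$ together with the spectral bound); the paper's argument, by contrast, needs only the first two invariant factors of the SNF -- a formally sharper determination statement -- and the same complete-bipartite analysis is reused verbatim for the star graphs in Theorem~\ref{teo:stargraphsaredeterminedbyDL}, which your spectral argument would not cover without new ideas.
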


 Before proving Theorem~\ref{teo:completegraphsaredeterminedbyDL}, we need the following results.

\begin{theorem}\cite{at}\label{teo:clasificationofgraphswith1trivialdistance}
A connected graph has only one trivial distance ideal over $\mathbb{Z}[X]$ if and only if $G$ is either a complete graph or a complete bipartite graph.
\end{theorem}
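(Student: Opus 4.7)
The plan is to combine the classification of graphs with one trivial distance ideal (Theorem \ref{teo:clasificationofgraphswith1trivialdistance}) with a direct spectral computation on $K_{a,b}$.

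First, I will pin down the target SNF. Since $D(K_n) = J - I$ and every vertex has transmission $n-1$, we have $F(K_n) = (n-1)I - (J-I) = nI - J = L(K_n)$, whose SNF is the familiar $\diag(1, n, n, \ldots, n, 0)$ (with $n-2$ copies of $n$), reflecting that the critical group of $K_n$ is $\mathbb{Z}_n^{n-2}$. In particular, $\Delta_2(F(K_n)) = n$ and $\Delta_{n-1}(F(K_n)) = n^{n-2}$.

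Now suppose $H$ is a connected graph on $n$ vertices with $\SNF(F(H)) = \diag(1,n,\ldots,n,0)$; I want to conclude $H \cong K_n$. The cases $n \leq 2$ are immediate, so assume $n\geq 3$. From the SNF we read off $\Delta_2(F(H)) = n > 1$. By Proposition \ref{prop:evalmultiplevariables}, the evaluation of $I_2^{\mathbb{Z}}(D_X(H))$ at the transmission vector of $H$ equals $\langle n\rangle$; since a trivial ideal would evaluate to a trivial ideal, $I_2^{\mathbb{Z}}(D_X(H))$ is itself nontrivial. As $I_1^{\mathbb{Z}}(D_X(H))$ is always trivial (any pair of adjacent vertices contributes a $\pm 1$ off-diagonal entry), $H$ has exactly one trivial distance ideal, and Theorem \ref{teo:clasificationofgraphswith1trivialdistance} forces $H \cong K_n$ or $H \cong K_{a,b}$ with $a+b=n$, $a,b\geq 1$.

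The remaining task, and the main technical step, is to exclude the complete bipartite alternative for $n\geq 3$. I will diagonalize $F(K_{a,b})$ block-wise using the bipartition: the all-ones vector gives eigenvalue $0$; vectors supported on one side and summing to zero give eigenvalues $2a+b$ (multiplicity $a-1$) and $a+2b$ (multiplicity $b-1$); and the remaining vector of the form $(\alpha\mathbf{1}_a,\beta\mathbf{1}_b)$ with $a\alpha+b\beta=0$ yields eigenvalue $n$. Since $F(K_{a,b})$ is symmetric with kernel spanned by the all-ones vector, a short check shows that every $(n-1)$-minor of $F(K_{a,b})$ agrees, up to sign, with a principal cofactor; hence $\Delta_{n-1}(F(K_{a,b}))$ equals any cofactor, which in turn equals $\tfrac{1}{n}\prod_{\lambda\neq 0}\lambda = (2a+b)^{a-1}(a+2b)^{b-1}$. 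It will then suffice to show
\[
(2a+b)^{a-1}(a+2b)^{b-1} > n^{n-2},
\]
which I will do by cases: $a=1$ (so the left side is $(2n-1)^{n-2} > n^{n-2}$), $b=1$ symmetrically, and $a,b\geq 2$ (so both bases strictly exceed $n$ and both exponents are positive). This forces $\Delta_{n-1}(F(K_{a,b})) > \Delta_{n-1}(F(K_n))$, contradicting the SNF hypothesis.

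The chief obstacle I anticipate is the last step: establishing that $\Delta_{n-1}$ of $F(K_{a,b})$ actually equals the cofactor (not just divides it) requires the structural observation that all $(n-1)$-minors of a symmetric zero-row-sum matrix of rank $n-1$ coincide up to sign, and the subsequent strict inequality demands a careful split because of the boundary cases where one of the exponents $a-1,b-1$ vanishes. The rest is a clean application of Proposition \ref{prop:evalmultiplevariables} and the classification theorem.
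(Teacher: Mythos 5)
Your proposal does not prove the statement in question. The statement to be established is the classification itself: a connected graph has exactly one trivial distance ideal over $\mathbb{Z}[X]$ if and only if it is complete or complete bipartite. Your very first sentence \emph{invokes} that classification (``combine the classification of graphs with one trivial distance ideal (Theorem \ref{teo:clasificationofgraphswith1trivialdistance}) with a direct spectral computation on $K_{a,b}$''), and everything that follows is an argument that complete graphs are determined by the SNF of their distance Laplacian matrix --- that is, a proof of Theorem \ref{teo:completegraphsaredeterminedbyDL}, not of Theorem \ref{teo:clasificationofgraphswith1trivialdistance}. Using the target statement as a lemma to derive a downstream corollary is circular as a proof of the target, so there is nothing here that can be compared with, or substituted for, the intended argument.

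Concretely, what is missing is the entire content of the classification: one must show (i) that for $G$ complete or complete bipartite the second distance ideal $I_2^{\mathbb{Z}}(D_X(G))$ is nontrivial (a computation with the $2$-minors of $\diag(x_1,\dots,x_n)-D(G)$), and, much harder, (ii) that every other connected graph has $I_2^{\mathbb{Z}}(D_X(G))=\langle 1\rangle$, which in \cite{at} is done by exhibiting, in any connected graph that is neither complete nor complete bipartite, suitable induced configurations whose $2$-minors generate the unit ideal. None of the ingredients you use --- the eigenvalues of $F(K_{a,b})$, the matrix-tree-type identification of $\Delta_{n-1}$ with a cofactor, the inequality $(2a+b)^{a-1}(a+2b)^{b-1}>n^{n-2}$ --- bears on either direction of this equivalence. (As a separate matter, the paper itself does not reprove this theorem; it is imported from \cite{at}. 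If your goal was Theorem \ref{teo:completegraphsaredeterminedbyDL}, your sketch is closer in spirit to that proof, though the paper argues there via the second invariant factor and Gr\"obner bases of the $2$-minors of $F(K_{n,m})$ rather than via $\Delta_{n-1}$ and eigenvalue products.)
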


 The next result is a consequence  of Proposition~\ref{prop:evalmultiplevariables} and Theorem~\ref{teo:clasificationofgraphswith1trivialdistance}.

\begin{corollary}
Let $G$ be a connected graph such that
its distance Laplacian matrix has at most one invariant factor equal to $1$, then $G$ is a complete graph or a complete bipartite graph.
\end{corollary}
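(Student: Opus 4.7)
The plan is to lift the hypothesis from a statement about invariant factors of the distance Laplacian $F(G) = T(G) - D(G)$ to a statement about the distance ideals of $G$, and then to invoke Theorem~\ref{teo:clasificationofgraphswith1trivialdistance}. The key observation is that $F(G)$ is precisely the specialization of the polynomial matrix $D_X(G) = \diag(x_1,\dots,x_n) - D(G)$ obtained by substituting each indeterminate $x_i$ by the transmission $t_i$ of vertex $v_i$.

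With this identification in hand, Proposition~\ref{prop:evalmultiplevariables} (applied with $\mathcal{R} = \mathbb{Z}$ and $\mathbf{c} = (t_1,\dots,t_n)$) asserts that the evaluation of $I_k^{\mathbb{Z}}(D_X(G))$ at $X = \mathbf{c}$ is the principal ideal $\langle \Delta_k(F(G)) \rangle$ in $\mathbb{Z}$. In particular, whenever $I_k^{\mathbb{Z}}(D_X(G)) = \langle 1\rangle$, the specialization is also trivial, so $\Delta_k(F(G))$ is a unit and, by the elementary divisors theorem (Theorem~\ref{theo:edt}), the $k$-th invariant factor $f_k(F(G))$ equals $1$. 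Contrapositively, if $f_k(F(G)) \neq 1$, then $I_k^{\mathbb{Z}}(D_X(G))$ cannot be trivial.

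Next I would observe that for any connected graph on at least two vertices the matrix $F(G)$ has an off-diagonal entry $-d(u,v) = -1$ coming from any adjacent pair, so $\Delta_1(F(G)) = 1$ and hence $f_1(F(G)) = 1$ always holds. The hypothesis that $F(G)$ has at most one invariant factor equal to $1$ therefore forces $f_2(F(G)) \neq 1$. Combining this with the previous paragraph, $I_2^{\mathbb{Z}}(D_X(G))$ is not trivial, and by the nesting in Proposition~\ref{prop:chaininclusionideals} none of the higher distance ideals $I_k^{\mathbb{Z}}(D_X(G))$ with $k \geq 2$ is trivial either. Thus $G$ has exactly one trivial distance ideal.

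Finally, Theorem~\ref{teo:clasificationofgraphswith1trivialdistance} classifies connected graphs with exactly one trivial distance ideal as the complete graphs and the complete bipartite graphs, yielding the desired conclusion. I do not foresee a serious obstacle: the entire argument amounts to translating invariant factors of $F(G)$ into distance ideals via the specialization $X = T(G)$ and then quoting the classification; the only point requiring care is the mild observation that $\Delta_1(F(G))$ is automatically $1$, which rules out the degenerate case of zero trivial invariant factors.
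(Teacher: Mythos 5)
Your proof is correct and follows essentially the same route as the paper's: specialize the distance ideals $I_k^{\mathbb{Z}}(D_X(G))$ at the transmission vector via Proposition~\ref{prop:evalmultiplevariables} to conclude that at most one invariant factor equal to $1$ forces at most one trivial distance ideal, then invoke Theorem~\ref{teo:clasificationofgraphswith1trivialdistance}. Your write-up is in fact slightly more careful than the paper's, which evaluates at $\deg(G)$ (apparently a slip for $T(G)$) and leaves implicit both the observation that $\Delta_1(F(G))=1$ and the use of the nesting of ideals.
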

\begin{proof}
By Proposition~\ref{prop:evalmultiplevariables}, if $G$ is a graph whose $I_k^\mathbb{Z}(D_X(G))$ is trivial, then, after evaluating $X=\deg(G)$, we have $\Delta_k(D(G))=1$. This implies that the family of graphs whose distance Laplacian matrix has at most one invariant factor  equal to 1 is contained in the family of graphs with at most one trivial distance ideals over $\mathbb{Z}[X]$. Hence, the result follows from Theorem~\ref{teo:clasificationofgraphswith1trivialdistance}.
\end{proof}

Note that the number of vertices of $G$ can be deduced from the SNF of the distance Laplacian matrix by looking at the number of elements in the diagonal of SNF of the distance Laplacian matrix.

Recall also that the distance Laplacian matrix of a complete graph coincides with its Laplacian matrix.
The SNF of the Laplacian matrix of a complete graph is known to be $\diag(1,n-1,\dots,n-1,0)$.
This implies that the sandpile group for the complete graph $K_n$ is isomorphic to $\mathbb{Z}_{n-1}^{n-2}$.

Now we are ready to prove Theorem~\ref{teo:completegraphsaredeterminedbyDL}.
\begin{proof}[Proof of Theorem~\ref{teo:completegraphsaredeterminedbyDL}]
We know that the second invariant factor of the SNF of $F(K_n)$ and the gcd of the 2-minors of $F(K_n)$ are equal to the number of vertices minus one. Next, we shall prove that in fact complete graphs are the unique graphs with this property.
Assume $n\geq m\geq 1$.
And let
$$
F(K_{n,m})=
\begin{bmatrix}
(2n+m)I_n-2J_n & -J_{n,m} \\
-J_{m,n} & (n+2m)I_m-2J_m\\
\end{bmatrix}
$$
be the distance Laplacian matrix of the complete bipartite graph $K_{n,m}$, where $J_n$ denote the all-one matrix of size $n\times n$.
It follows that $\Delta_1(F(K_{n,m}))=1$.
Let us consider two cases: when $K_{m,n}$ is a star and when it is not a star.
In the first case, the 2-minors (with positive leading coefficient) of $F(K_{n,1})$ are:
\[
L_1=\{ 4n^2 - 4n - 3, 2n + 1, 2n^2 - n - 1\}.
\]
Considering $n$ and $m$ as indeterminates, the Gr\"obner basis of the ideal $\langle L_1\rangle\subseteq \mathbb{Z}[n,m]$ is generated by $2n+1$, which is different from $n=(n+1)-1$.
Now, the 2-minors (with positive leading coefficient) of $F(K_{n,m})$ are:
\begin{eqnarray*}
L_2 & = & \{ 
4n^2 + 4nm - 8n + m^2 - 4m, 
2n + m, 
0, 
2n^2 + 5nm - 6n + 2m^2 - 6m + 3, \\
& &
4n + 2m - 3, 
2n + 4m - 3, 
3, 
n + 2m, 
n^2 + 4nm - 4n + 4m^2 - 8m \}.
\end{eqnarray*}
Considering $n$ and $m$ as indeterminates, the Gr\"obner basis of the ideal $\langle L_2\rangle\subseteq \mathbb{Z}[n,m]$ is generated by $n+2m$ and 3, which is not principal.
In fact, $m+n-1\notin \langle n+2m,3\rangle\subseteq\mathbb{Z}[n,m]$.
From which follows that complete graphs are the only graphs having the first invariant factor of the SNF of the distance Laplacian matrix equal to 1 and the second invariant factor equal to the number of vertices minus one.
\end{proof}

\begin{theorem}\label{teo:stargraphsaredeterminedbyDL}
Star graphs are determined by the SNF of the distance Laplacian matrix.
\end{theorem}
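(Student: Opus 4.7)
The plan is to follow the same template as the proof of Theorem~\ref{teo:completegraphsaredeterminedbyDL}: first compute the SNF of $F(K_{1,n-1})$, then use its second invariant factor together with the classification of graphs with exactly one trivial distance ideal (Theorem~\ref{teo:clasificationofgraphswith1trivialdistance}) to rule out every other graph on $n$ vertices.

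For the SNF computation, order the center $v_0$ first and the $n-1$ leaves afterwards; then $T(v_0) = n-1$, $T(v_i) = 2n-3$ for every leaf, and
\[
F(K_{1,n-1}) = \begin{pmatrix} n-1 & -\mathbf{1}^\top \\ -\mathbf{1} & (2n-1)I_{n-1} - 2J_{n-1} \end{pmatrix}.
\]
Applying $R_i \mapsto R_i - R_2$ for $i = 3, \ldots, n$ produces rows with only two nonzero entries $\pm(2n-1)$. Subsequent column cleanups propagate $2n-1$ down the diagonal of the lower-right block, while the $-1$ in position $(2,1)$ of the transformed matrix extracts a single trivial invariant factor. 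This yields $\diag(1, 2n-1, \ldots, 2n-1, 0)$ with $n-2$ copies of $2n-1$; in particular $\Delta_1 = 1$ and $\Delta_2 = 2n-1$.

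Suppose now that $H$ is a connected graph on $n$ vertices whose distance Laplacian has the same SNF. Because $H$ is connected, $I_1^\mathbb{Z}(D_X(H)) = \langle 1\rangle$ automatically; on the other hand, Proposition~\ref{prop:evalmultiplevariables} applied at $X = T(H)$ gives $\Delta_2(F(H)) = 2n-1 \neq 1$, hence $I_2^\mathbb{Z}(D_X(H)) \neq \langle 1\rangle$. By the nesting in Proposition~\ref{prop:chaininclusionideals}, $H$ has exactly one trivial distance ideal, so Theorem~\ref{teo:clasificationofgraphswith1trivialdistance} forces $H$ to be complete or complete bipartite. Now $K_n$ has $F(K_n) = nI_n - J_n$ with SNF $\diag(1, n, \ldots, n, 0)$, so its second invariant factor is $n \neq 2n-1$ for $n \geq 2$, and this case is excluded. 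For $K_{a,b}$ with $a, b \geq 2$ and $a+b = n$, the Gr\"obner basis computation inside the proof of Theorem~\ref{teo:completegraphsaredeterminedbyDL} shows that the polynomial ideal generated by the $2$-minors of $F(K_{a,b})$ in $\mathbb{Z}[a,b]$ equals $\langle a+2b, 3\rangle$; specializing to fixed $a,b \geq 2$ then forces $\Delta_2(F(K_{a,b})) \mid \gcd(a+2b, 3) \leq 3$, which is strictly smaller than $2n-1 \geq 5$ whenever $n \geq 3$. The only complete bipartite graph on $n$ vertices left standing is the star $K_{1,n-1}$ itself.

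The main obstacle I expect is the SNF computation in the second paragraph: although it reduces to a routine sequence of elementary operations, care is needed to confirm that the nontrivial invariant factors are all exactly $2n-1$ rather than some coarser divisor pattern. The structural fact that makes this work is that any two leaf rows differ only in two coordinates and the difference is $\pm(2n-1)$, which forces $2n-1$ into the SNF; small-case checks at $n = 3, 4$ (yielding $\diag(1, 5, 0)$ and $\diag(1, 7, 7, 0)$ respectively) provide a useful sanity test.
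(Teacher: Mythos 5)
Your proposal is correct and takes essentially the same route as the paper: you reduce the candidates to complete and complete bipartite graphs via the classification of graphs with exactly one trivial distance ideal, and then separate the star by its second invariant factor $2n-1$ (the paper's $2n+1$ in its $K_{n,1}$ parametrization). The only differences are in detail -- you compute the star's invariant factors directly and exclude $K_{a,b}$ with $a,b\geq 2$ by observing that $3$ occurs as a $2$-minor of $F(K_{a,b})$, so $\Delta_2\mid 3<2n-1$, whereas the paper argues via the non-membership $2(n+m)+1\notin\langle n+2m,3\rangle\subseteq\mathbb{Z}[n,m]$; both are sound, and your explicit exclusion of $K_n$ using $f_2(F(K_n))=n$ is a correct (indeed cleaner) version of the step the paper leaves implicit.
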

\begin{proof}
From the previous proof of Theorem~\ref{teo:completegraphsaredeterminedbyDL}, we know that the first invariant factor of the SNF of $F(K_{n,1})$ is equal to 1 and the second invariant factor is equal to $2n+1$. 
Consider $K_{n,m}$ with $n\geq m\geq 2$.
We have that $2(n+m)+1\notin\langle n+2m,3 \rangle\subseteq \mathbb{Z}[n,m]$, from which follows that star graphs are the only graphs having the first invariant factor equal to 1 and the second invariant factor equal to two times the number of vertices plus one.
\end{proof}

Recall that Lorenzini and Vince \cite{lorenzini1991,vince} showed that complete graphs are the only graphs with only one invariant factor equal to one in the SNF of the Laplacian matrix, which leads directly to the following result.

\begin{theorem}
Complete graphs are determined by the SNF of the Laplacian matrix.
\end{theorem}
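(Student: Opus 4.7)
The plan is to apply the Lorenzini--Vince characterization recalled in the sentence immediately preceding the statement, together with the trivial observation that the size of a matrix is recoverable from its Smith normal form.

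First, I would verify the shape of $\SNF(L(K_n))$. It is classical (and already recorded earlier in the paper, e.g.\ in the corollary computing $K(K_n)$) that
\[
\SNF(L(K_n)) = \diag(1,\underbrace{n-1,\dots,n-1}_{n-2\text{ times}},0).
\]
In particular, exactly one invariant factor of $L(K_n)$ is equal to $1$: $f_1 = 1$, while $f_2=\dots=f_{n-1} = n-1 \geq 2$ and $f_n = 0$ (reflecting that $K_n$ is connected).

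Next, suppose $G$ is any graph that is $L^\mathbb{Z}$-coinvariant with $K_n$. Since the SNF of an $n \times n$ matrix is itself an $n \times n$ matrix, $G$ must have $n$ vertices. The invariant factors of $L(G)$ then agree with those above, so $L(G)$ also has exactly one invariant factor equal to $1$. By the Lorenzini--Vince theorem (cited just before the statement), complete graphs are the \emph{only} graphs whose Laplacian SNF has exactly one invariant factor equal to $1$. Hence $G = K_n$.

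Since there is no real combinatorial obstacle, the proof is essentially a bookkeeping step: the only thing one must be careful about is that the number of vertices is indeed encoded in the SNF (guaranteed by the matrix dimension), and that the count of ``invariant factors equal to $1$'' really does single out $K_n$ within graphs on $n$ vertices, which is exactly the content of the Lorenzini--Vince result.
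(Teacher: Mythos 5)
Your proof is correct and follows exactly the route the paper intends: the paper derives this theorem directly from the Lorenzini--Vince characterization of graphs with exactly one invariant factor equal to $1$, and your write-up just fills in the routine bookkeeping (the known form $\diag(1,n-1,\dots,n-1,0)$ of $\SNF(L(K_n))$ and the recovery of the vertex count from the size of the SNF).
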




As we have seen the best determinantal ideals, aside to the critical and distance ideals, to distinguish graphs are the univariate determinantal ideals in $\mathbb{Z}[x]$ since they encode information on the SNF and the spectrum. 
They are more difficult to compute since these ideals are not principal in general, and we have to compute their Gr\"obner bases instead.
However, the last univariate determinantal ideal of a $n\times n$ matrix $M$, $I_n^\mathbb{Z}(M_x)$, is generated by the determinant $\det(M_x)$, from which follows that if a graph $G$ is determined by the spectrum, then $G$ is determined by their univariate determinantal ideals. 
Thus univariate determinantal ideals in $\mathbb{Z}[x]$ can be used to distinguish graphs in cases where the spectrum fail, as in Example \ref{example:differencechangingcoefficientspolynomialring}.

\subsection*{Acknowledgments}
Carlos A. Alfaro and Marcos C. Vargas are partially supported by CONACyT and SNI.


\newpage

\appendix
\section{Computing characteristic and distance characteristic ideals with Macaulay2}
In this appendix a code for computing the characteristic ideals of graphs with Macaulay2 \cite{m2} is provided.
For this example, we consider the graph $\ltimes$, and the polynomial ring $\mathcal{R}=\mathbb{Z}[t]$.
We define the determinantal ideal $I$ generated by the set of minors of size $i$ of matrix $M$ with the code {\tt minors(i,M)}, and compute its Gr\"obner bases with {\tt gens gb I}.
Thus, the following will compute the characteristic ideals of $\ltimes$ over $\mathbb{Z}$.
\begin{lstlisting}
G = graph({{0,1},{0,2},{0,3},{0,4},{2,3}})
R = ZZ[t]
M = diagonalMatrix{t,t,t,t,t}-G.adjacencyMatrix
for i from 1 to 5 do (
   I = minors(i,M);
   print(gens gb I);
)
\end{lstlisting}
The output is the following:
\begin{lstlisting}[numbers=none]
| 1 |
| 1 |
| 1 |
| 2 t+1 |
| t5-5t3-2t2+2t |
\end{lstlisting}
from which follows that $\gamma_\mathbb{Z}(G)=3$ and $I_4^{\mathbb{Z}}(G,t)=\langle2,t+1\rangle$ and $I_4^{\mathbb{Z}}(G,t)=\langle t^5-5t^3-2t^2+2t\rangle$.

Computing Gr\"obner basis of the distance characteristic ideals is also easy with Macaulay2:

\begin{lstlisting}
D = diagonalMatrix{t,t,t,t,t}-distanceMatrix G
for i from 1 to 5 do (
   I = minors(i,D);
   print(gens gb I);
)
\end{lstlisting}
The output is the following.
\begin{lstlisting}[numbers=none]
| 1 |
| 1 |
| 1 |
| 6 t-1 |
| t5-25t3-70t2-66t-20 |
\end{lstlisting}

We found useful to use McKay's Nauty software \cite {mckay} as Macaulay2's package {\tt NautyGraphs}.
For example the graph $\ltimes$ can also be loaded with the following code:
\begin{lstlisting}
loadPackage "NautyGraphs"
stringToGraph "Dt_"
\end{lstlisting}

\section{Difficulties in computing determinantal ideals}\label{appendix:difficulties}
Many mathematicians have faced the difficulty of either to trust or to not the results obtained under computer algebra systems  \cite{dpv}.
In computing Table \ref{tab:statisticscodeterminantalideals}, we had a similar situation when we had to compare the equality of two ideals, which is a common problem in commutative algebra.
Consider the following code in Cocalc, in which, we compare two ideals by two methods: one direct (line 4) and the other by using Gr\"obner bases (line 7).
\begin{lstlisting}
R = PolynomialRing(ZZ, 'x', implementation="singular")
I = R.ideal([x^3 + 1086*x^2 - 22022*x + 108388, 1106*x^2 - 22120*x + 108388])
J = R.ideal([x^3 - 20*x^2 + 98*x, 1106*x^2 - 22120*x + 108388])
I == J
I = I.groebner_basis()
J = J.groebner_basis()
I == J
\end{lstlisting}
The output of this code is the following.
\begin{lstlisting}[numbers=none]
True
False
\end{lstlisting}
The first one says that the ideals are the same and the other that they are not.
Therefore, one is incorrect.
For this example, the reader can easily verify by hand that both ideals are equal.
The same error occours if we try to use Maculay2 on Cocalc.
\begin{lstlisting}
P = macaulay2.ring('ZZ', '[x]')
I = macaulay2.ideal( ("x^3 + 1086*x^2 - 22022*x + 108388", "1106*x^2 - 22120*x + 108388") )
J = macaulay2.ideal( ("x^3 - 20*x^2 + 98*x", "1106*x^2 - 22120*x + 108388") )
I == J
I = I.gb()
J = J.gb()
I == J
\end{lstlisting}
But when the computation is done in pure Macaulay2, the result is correct.
\begin{lstlisting}
R = ZZ[x];
I = ideal({x^3 + 1086*x^2 - 22022*x + 108388,1106*x^2 - 22120*x + 108388});
J = ideal({x^3 - 20*x^2 + 98*x,1106*x^2 - 22120*x + 108388});
I == J
gens gb I == gens gb J
\end{lstlisting}
This mistake could be due to the computation of the Gr\"obner bases of polynomials with coefficients in $\mathbb{Z}$ is not a common task, making it susceptible.

\begin{thebibliography}{99}

\bibitem{alfaro} C.A. Alfaro. Graphs with real algebraic co-rank at most two. \emph{Linear Algebra Appl.} 556 (2018), 100--107.



\bibitem{alflin}{C.A. Alfaro and J. C.-H. Lin. Critical ideals, minimum rank and zero forcing number. \emph{Appl. Math. Comput.} 358 (2019), 305--313.}

\bibitem{at}{C.A. Alfaro and L. Taylor. Distance ideals of graphs. \emph{Linear Algebra Appl.} 584 (2020), 127--144.}


    \bibitem{AV}{C.A. Alfaro and C.E. Valencia. Graphs with two trivial critical ideals. \emph{Discrete Appl. Math.} 167 (2014), 33--44.}
    
    \bibitem{AV1}{C.A. Alfaro and C.E. Valencia. Small clique number graphs with three trivial critical ideals. 
    \emph{Special Matrices} 6 (2018), 122--154.}
    
    \bibitem{AVV}{C.A. Alfaro, C.E. Valencia and A. V\'azquez-\'Avila. Digraphs with at most one trivial critical ideal. \emph{Linear and Multilinear Algebra} 66 (2018), 2036--2048.}
    
\bibitem{ah} M. Aouchiche and P. Hansen. Cospectrality of graphs with respect to distance matrices. \emph{Appl. Math. Comput.} 325 (2018), 309--321.

\bibitem{ah2014} M. Aouchiche and P. Hansen. Some properties of the distance Laplacian eigenvalues of a graph. \emph{Czech. Math. J.} 64 (2014), 751--761 .

\bibitem{ah2013} M. Aouchiche and P. Hansen. Two Laplacians for the distance matrix of a graph. \emph{Linear Algebra Appl.} 439 (2013), 21--33.


\bibitem{BK}{R.B. Bapat and M. Karimi. Smith normal form of some distance matrices. \emph{Linear Multilinear Algebra} 65 (2017), 1117--1130.}

\bibitem{biggs1999}{N. Biggs. Chip-firing and the critical group of a graph. \emph{J. Alg. Combin.} 9 (1999), 25-46.}

\bibitem{bdhlrsy2019} B. Brimkov, K. Duna, L. Hogben, K. Lorenzen, C. Reinhart, S.Y. Song and M. Yarrow. Graphs that are cospectral for the distance Laplacian. Preprint arXiv: 1812.05734.

\bibitem{bh}{A.E. Brouwer and W.H. Haemers. Spectra of Graphs, Universitext, Springer, 2012.}




\bibitem{bcbook} W. Bruns and A. Conca. Gr\"obner Bases and Determinantal Ideals. Gr\"obner Bases and Determinantal Ideals. In: Herzog J., Vuletescu V. (eds) Commutative Algebra, Singularities and Computer Algebra. NATO Science Series (Series II: Mathematics, Physics and Chemistry), vol 115. Springer, Dordrecht (2003).





\bibitem{cori}{R. Cori and D. Rossin. On the sandpile group of dual graphs. \emph{European J. Combin.} 21 (4) (2000), 447--459.}

\bibitem{cohen}H. Cohen. A Course in Computational Algebraic Number Theory, vol.138, Springer Science \& Business Media, 1993.

\bibitem{corrval}{H. Corrales and C.E. Valencia. On the critical ideals of graphs. \emph{Linear Algebra Appl.} 439 (2013), 3870--3892.}




\bibitem{vDH} E.R. van Dam and W.H. Haemers.
Which graphs are determined by their spectrum?
\emph{Linear Algebra Appl.} 373 (2003), 241--272.

\bibitem{DAH2018} K.C. Das, M. Aouchiche and P. Hansen. On distance Laplacian and distance signless Laplacian eigenvalues of graphs. \emph{Linear and Multilinear Algebra} (2018), 1--18.

\bibitem{dpv} A.J. Dur\'an, M. P\'erez and J.L. Varona. The misfortunes of a trio of mathematicians using computer algebra systems. Can we trust in them? \emph{Notices Amer. Math. Soc.} 61 (2014), 1249--1252.


\bibitem{eg} M. Elsheikh and M. Giesbrecht. Relating p-adic eigenvalues and the local Smith normal form. \emph{Linear Algebra Appl.} 481 (2015), 330--349.

\bibitem{EH}{V. Ene and J. Herzog. Gr\"obner bases in commutative algebra. Graduate Studies in Mathematics, 130. American Mathematical Society, Providence, RI, 2012. xii+164 pp. ISBN: 978-0-8218-7287-1}


\bibitem{grt}{N. Ghareghani, F. Ramezani and B. Tayfeh-Rezaie. Graphs cospectral with starlike trees. \emph{Linear Algebra Appl.} 429 (2008), 2691--2701.}


\bibitem{m2}{D.R. Grayson and M.E. Stillman. {\it Macaulay2, a software system for research in algebraic geometry}, available at www.math.uiuc.edu/Macaulay2/.}



\bibitem{HW}{Y. Hou and C. Woo. Distance unimodular equivalence of graphs. \emph{Linear Multilinear Algebra} 56 (2008), 611--626. }

\bibitem{jacobson}{N. Jacobson. Basic Algebra I, Second Edition, W. H. Freeman and Company, New York, 1985.}

\bibitem{JN}{C.R. Johnson and M. Newman. A note on cospectral graphs. \emph{J. Combin.
Theory Ser. B} 28 (1980), 96--103. }


\bibitem{kannan}R. Kannan and A. Bachem. Polynomial algorithms for computing the Smith and Hermite normal forms of an integer matrix,  \emph{SIAM J. Comput.} 8 (1979), 499--507.

\bibitem{kannan1}R. Kannan. Polynomial-time algorithms for solving systems of linear equations over polynomials.  \emph{Theoret. Comput. Sci.} 39 (1985), 69--88.

\bibitem{kirkland} S. Kirkland. Constructably Laplacian integral graphs. \emph{Linear Algebra Appl.} 423 (1) (2007), 3--21.


\bibitem{klivans}C.J. Klivans. The Mathematics of Chip-Firing. CRC Press, Taylor \& Francis Group, 2018. ISBN: 978-1-138-63409-1

\bibitem{katzman} M. Katzman. On Ideals of Minors of Matrices with Indeterminate Entries. \emph{Communications in Algebra} 36 (2008), 104--111.

\bibitem{lorenzini1991}{D.J. Lorenzini. A finite group attached to the Laplacian of a graph. \emph{Discrete Math.} 91 (1991), 277--282.}

\bibitem{lorenzini2008}{D.J. Lorenzini. Smith normal form and Laplacians. \emph{J. Combin. Theory B} 98 (2008), 1271--1300.}

\bibitem{MaMe}E. Mayr and A. Meyer. The complexity of the word problems for commutative semigroups and polynomial ideals. \emph{Adv. Math.} 46 (1982), 305--329.

\bibitem{ms} D. Maclagan and B. Sturmfels. Introduction to Tropical Geometry, American Mathematical Society, 2015.

\bibitem{mccoy}N.H. McCoy. Rings and ideals, (Carus Mathematical Monographs, no. 8.) Mathematical Association of America, 1948. 

\bibitem{mckay}{B.D. McKay. {\it nauty User’s Guide (Version 2.4)}, available at http://cs.anu.edu.au/$\sim$bdm/nauty/.}

\bibitem{merino}{C. Merino, The chip-firing game. \emph{Discrete Math.} 302 (2005), 188--210.}





\bibitem{np2014} M. Nath and S. Paul. On the distance Laplacian spectra of graphs. \emph{Linear Algebra Appl.} 460 (2014), 97--110. 

\bibitem{nt}{M. Newman and R.C. Thompson. Matrices over rings of algebraic integers. \emph{Linear Algebra Appl.} 145 (1991), 1--20.}


\bibitem{Mbook} R.M. Mir\'o-Roig. Determinantal Ideals, Springer Science $\&$ Business Media, 2007.


\bibitem{Nbook}{D. Northcott. Finite Free Resolutions. \emph{Cambridge Tracts in Mathematics} (1976), Cambridge University Press.}

 
\bibitem{R}{J.J. Rushanan. Eigenvalues and the Smith normal form. \emph{Linear Algebra Appl.} 216 (1995), 177--184.}

\bibitem{stanley}{R.P. Stanley. Smith normal form in combinatorics. \emph{J. Combin. Theory A} 144 (2016), 476--495.}

\bibitem{sage} The Sage Developers.  SageMath, the Sage Mathematics Software System (Version 8.0), 2017. Available at \url{http://www.sagemath.org}.

\bibitem{schrijver}A. Schrijver. Theory of Linear and Integer Programming, John Wiley
\& Sons, New York, 1986.

\bibitem{vince}{A. Vince. Elementary Divisors of Graphs and Matroids. \emph{Europ. J. Combinatorics} 12 (1991), 445--453.}

\bibitem{wagner}{D.G. Wagner. The critical group of a directed graph. Preprint arXiv: math/0010241v1 [math.CO]}.

\bibitem{wood}{M.M. Wood. The distribution of sandpile groups of random graphs. \emph{J. Amer. Math. Soc.} 30 (2017), 915--958.}


\end{thebibliography}
\end{document}